\newtheorem{thm}{Theorem}[section]
\newtheorem{crl}[thm]{Corollary}
\newtheorem{lmm}[thm]{Lemma}
\newtheorem{prp}[thm]{Proposition}
\theoremstyle{definition}
\newtheorem{dfn}[thm]{Definition}
\newtheorem{exa}[thm]{Example}
\theoremstyle{remark}
\newtheorem*{rem}{Remark}
\numberwithin{equation}{section}
\DeclareMathOperator{\Top}{\bf Top}
\DeclareMathOperator{\Diff}{\bf Diff}
\DeclareMathOperator{\NumGen}{\bf NG}
\DeclareMathOperator{\FCW}{\bf FCW}
\DeclareMathOperator{\map}{\bf map}
\DeclareMathOperator{\smap}{\bf smap}
\DeclareMathOperator{\C}{\bf C}
\DeclareMathOperator{\SLEF}{\bf SLEF}
\DeclareMathOperator{\Spec}{\bf Spec}
\DeclareMathOperator{\op}{op}
\DeclareMathOperator{\ev}{ev}
\DeclareMathOperator*{\colim}{colim}
\newcommand{\Cinfty}{C^{\infty}}
\newcommand{\spec}[1]{\partial{#1}}
\title{Homology and cohomology via enriched bifunctors}
\author[K. Shimakawa]{Kazuhisa Shimakawa}
\address{Graduate School of Natural Science and Technology,
  Okayama University, Okayama 700-8530 Japan}
\author{Kohei Yoshida}
\address{Graduate School of Natural Science and Technology,
  Okayama University, Okayama 700-8530 Japan}
\author{Tadayuki Haraguchi}
\address{Graduate School of Natural Science and Technology,
  Okayama University, Okayama 700-8530 Japan}
\dedicatory{To the memory of Professor Nobuo Shimada}
\subjclass[2000]{Primary 55N20; Secondary 18G55, 18B30}
\begin{document}
\begin{abstract}
  Let $\Top$ and $\Diff$ be the categories of topological and
  diffeological spaces, respectively.
  By using an adjunction between $\Top$ and $\Diff$ we show that
  the full subcategory $\NumGen$ of $\Top$ consisting of numerically
  generated spaces is complete, cocomplete, and cartesian closed.
  In fact, $\NumGen$ can be embedded into $\Diff$ as a cartesian closed
  full subcategory.
  It follows then that the category $\NumGen_0$ of numerically generated
  pointed spaces is complete, cocomplete, and monoidally closed
  with respect to the smash product.
  These features of $\NumGen_0$ are used to establish a simple but
  flexible method for constructing generalized homology and
  cohomology theories by using the notion of enriched bifunctors.
\end{abstract}

\maketitle
\section{Introduction}
\label{sec:introduction}

On the category of pointed topological spaces,
generalized homology and cohomology theories are usually constructed
by using spectra.
However, the definition of a spectrum is not suitable
for more general type of model categories,
e.g.\ the category of equivariant spaces,
because it is too tightly connected to the framework of
classical stable homotopy theory.
In this paper, we present an alternative and more category theoretical
approach to homology and cohomology theories which is based on
the notion of a linear enriched functor instead of a spectrum.

Our strategy is as follows:
We first replace the category $\Top_0$ of pointed topological spaces by
a convenient full subcategory $\NumGen_0$ which fulfills our requirements.
More explicitly, we call a topological space $X$ numerically generated
if it has the final topology with respect to its singular simplexes,
and define $\NumGen_0$ to be the full subcategory of $\Top_0$
consisting of numerically generated spaces.
Then $\NumGen_0$ is complete, cocomplete, and is monoidally closed
in the sense that there is an internal hom $Z^Y$ 
satisfying a natural bijection
\[
\hom_{\NumGen_0}(X \wedge Y,Z) \cong \hom_{\NumGen_0}(X,Z^Y).
\]
Moreover, there exist a reflector $\nu \colon \Top_0 \to \NumGen_0$
such that the natural map $\nu{X} \to X$ is a weak equivalence,
and a sequence of weak equivalences
\[
Y^X \leftarrow \nu\map_0(X,Y) \rightarrow \map_0(X,Y)
\]
where $\map_0(X,Y)$ is the set of pointed maps from $X$ to $Y$
equipped with the compact-open topology.
Thus $\NumGen_0$ is eligible, from the viewpoint of homotopy theory,
as a convenient replacement for $\Top_0$.

A functor $T \colon \NumGen_0 \to \NumGen_0$ is called enriched if
the correspondence which maps $f \colon X \to Y$ to $Tf \colon TX \to TY$
induces a morphism $Y^X \to TY^{TX}$ between internal homs,
and is called linear if it converts a cofibration sequence
into a homotopy fibration sequence.
%
Theorem~\ref{thm:homology_theory} states that every linear enriched functor
$T$ defines a generalized homology theory $X \mapsto \{ h_n(X;T) \}$
such that $h_n(X;T)$ is isomorphic to $\pi_n TX$ for $n \geq 0$,
and to $\pi_0 T(\Sigma^{-n}X)$ for $n < 0$.
We see that every homology theory represented by a spectrum comes from
a linear enriched functor, and vice versa.
In fact, the functor which assigns to a linear enriched functor
its derivative, in the sense of Goodwillie~\cite{goodwillie_I},
at $S^0$ induces a Quillen equivalence between the model categories of
linear enriched functors and spectra.
Thus the homotopy category of linear enriched functors is
equivalent to the stable category.
A benefit of our approach is its flexibility:
For example, the enrichedness of $T$ implies the orthogonality,
hence the symmetricity, of its derivatives
$\{ T(\Sigma^n X) \mid n \geq 0 \}$.

Our method for constructing cohomology theories is based on
the notion of a bifunctor
\(
\NumGen_0^{\op} \times \NumGen_0 \to \NumGen_0,
\)
contravariant with respect to the first argument and covariant
with respect to the second argument.
There are bivariant version of the notions of enrichedness and linearity,
and Theorem~\ref{thm:bivariant_theory} states that every bifunctor $F$
which is bivariantly enriched and linear determines a pair of
a generalized homology theory $X \mapsto \{h_n(X;F)\}$ and
a cohomology theory $X \mapsto \{h^n(X;F)\}$ such that
\[
h_n(X;F) \cong \pi_0 F(S^{n+k},\Sigma^k X), \quad
h^n(X;F) \cong \pi_0 F(\Sigma^k X,S^{n+k})
\]
hold whenever $k$ and $n+k$ are non-negative.
Just as a spectrum represents both homology and cohomology theories,
every linear enriched functor $T \colon \NumGen_0 \to \NumGen_0$
produces homology and cohomology theories through the bilinear functor
$F$ given by the formula $F(X,Y) = (TY)^X$. 
However, there exist enriched bifunctors not of the form as above
but induce interesting bivariant homology-cohomology theories.
(Cf.\ Remark after Theorem~\ref{thm:bivariant_theory}.)

Since our approach is based solely on the standard category theoretical
notions such as enrichedness and linearity,
it can be easily extended to other model categories consisting of
simplicial sets, diffeological spaces, equivariant
(topological or diffeological) spaces, etc.
Some of such extensions will be discussed in a subsequent paper.
\section{Diffeological spaces}
\label{sec:relation}

To derive the properties of the category of numerically generated spaces
we use an adjunction between the categories of topological and
diffeological spaces, respectively.
Recall from \cite{zemmour:diffeology} that
a diffeological space consists of a set $X$ together with
a family $D$ of maps from open subsets of Euclidean spaces into $X$
satisfying the following conditions:

\smallskip

\paragraph{\bf Covering} Any constant parametrization $\mathbf{R}^n \to X$
belongs to $D$.

\paragraph{\bf Locality} A parametrization $\sigma \colon U \to X$ belongs
to $D$ if every point $u$ of $U$ has a neighborhood $W$ such that
$\sigma|W \colon W \to X$ belongs to $D$.

\paragraph{\bf Smooth compatibility} If $\sigma \colon U \to X$ belongs to
$D$, then so does the composite $\sigma f \colon V \to X$ for any
smooth map $f \colon V \to U$ between open subsets of Euclidean
spaces.

\smallskip

We call $D$ a diffeology of $X$, and each member of $D$ a plot of $X$.

A map between diffeological spaces $f \colon X \to Y$ is called smooth
if for any plot $\sigma \colon U \to X$ of $X$ the composite
$f\sigma \colon U \to Y$ is a plot of $Y$.
In particular, if $D$ and $D'$ are diffeologies on a set $X$
then the identity map $(X,D) \to (X,D')$ is smooth if and only if
$D$ is contained in $D'$.
In that case, we say that $D$ is finer than $D'$, or
$D'$ is coarser than $D$.
Clearly, the class of Diffeological spaces and smooth maps form
a category $\Diff$. 

\begin{thm}
  The category $\Diff$ is complete, cocomplete, and cartesian closed.
\end{thm}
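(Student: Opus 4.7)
The plan is to build everything by transporting the relevant structures from $\Set$ and equipping the underlying sets with canonical diffeologies, exactly as one does for $\Top$, but with plots in place of continuous maps.

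For completeness, given a small diagram $F\colon J \to \Diff$, I would form the limit $L$ of the underlying sets in $\Set$ and declare a parametrization $\sigma\colon U \to L$ to be a plot iff its composition with each projection $L \to F(j)$ is a plot of $F(j)$. Verifying the three axioms (covering, locality, smooth compatibility) is straightforward because each axiom is preserved by ``pullback along all projections''. The universal property then reads off from the definition. Dually, for cocompleteness, I would take the colimit $C$ in $\Set$ and equip it with the finest diffeology making every structural map $F(j) \to C$ smooth; concretely a parametrization is a plot iff it locally factors (up to precomposition with a smooth map of open subsets of Euclidean spaces) through one of the $F(j)$'s composed with its structural map. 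The locality axiom is built in; covering and smooth compatibility are immediate. This gives the colimit.

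For cartesian closure, the product $X \times Y$ in $\Diff$ is the underlying set product with the product diffeology: $\sigma\colon U \to X \times Y$ is a plot iff $\mathrm{pr}_X\sigma$ and $\mathrm{pr}_Y\sigma$ are plots. Next I would define the internal hom $Y^X$ to be the set of smooth maps $X \to Y$ endowed with the \emph{functional diffeology}: a parametrization $\tau\colon U \to Y^X$ is a plot iff the adjoint map $U \times X \to Y$, $(u,x) \mapsto \tau(u)(x)$, is smooth. I would check that this family satisfies the three axioms (covering uses that constant $\tau$'s adjoint factors through a projection; locality follows because smoothness of the adjoint is a local condition on $U$; smooth compatibility is smoothness of $(V\xrightarrow{f} U)\times X \to U \times X \to Y$).

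The exponential adjunction $\hom_{\Diff}(X \times Y, Z) \cong \hom_{\Diff}(X, Z^Y)$ then follows from the set-theoretic bijection of currying once I verify two things: (i) if $g\colon X \times Y \to Z$ is smooth, then for each $x \in X$ the map $g(x,-)\colon Y \to Z$ is smooth (so the curried map lands in $Z^Y$), and the curried map $X \to Z^Y$ is smooth (its adjoint is $g$ itself, which is smooth); and (ii) conversely, if $\hat g\colon X \to Z^Y$ is smooth, then its uncurrying is smooth because, for any plot $(\sigma,\rho)\colon U \to X \times Y$, the composite $u \mapsto \hat g(\sigma(u))(\rho(u))$ factors as $U \xrightarrow{(\sigma,\rho)} X \times Y \to Z^Y \times Y \xrightarrow{\mathrm{ev}} Z$, and evaluation $Z^Y \times Y \to Z$ is smooth by the very definition of the functional diffeology.

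The main obstacle is the evaluation-is-smooth step and the symmetry between the two directions of currying; once evaluation is smooth, both directions of the adjunction follow formally. Naturality in $X$, $Y$, $Z$ is immediate from the formula for the adjunction on underlying sets.
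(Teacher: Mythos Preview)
Your proposal is correct and follows essentially the same route as the paper: (co)limits are built on the underlying set-level (co)limits equipped with the initial/final diffeology, and the internal hom is the set of smooth maps with the functional diffeology, from which the exponential adjunction follows. The paper merely lists the relevant constructions (products, coproducts, subspaces, quotients, exponentials) and cites Iglesias-Zemmour for the exponential law, whereas you spell out the verification of the adjunction; your formulation of the functional diffeology (adjoint $U\times X\to Y$ smooth) is equivalent to the paper's (for every plot $\tau\colon V\to X$ the map $U\times V\to Y$ is a plot).
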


A category is complete if it has equalizers and small products,
and is cocomplete if it has coequalizers and small coproducts.
Therefore, the theorem follows from the basic constructions given below.

\smallskip

\paragraph{\bf Products}
Given diffeological spaces $X_j$, $j \in J$, their product
is given by a pair $(\prod_{j \in J}X_j,D)$, where $D$ is the set of
parametrizations $\sigma \colon U \to \prod_{j \in J}X_j$ such that
every its component $\sigma_j \colon U \to X_j$ is a plot of $X_j$.

\paragraph{\bf Coproducts}
The coproduct of $X_j$, $j \in J$, is given by
$(\coprod_{j \in J}X_j,D)$, where $D$ is the set of parametrizations
$\sigma \colon U \to \coprod_{j \in J}X_j$ which can be written locally
as the composite of the inclusion $X_j \to \coprod_{j \in J}X_j$
with a plot of $X_j$.  

\paragraph{\bf Subspaces}
Any subset $A$ of a diffeological space $X$ is itself a diffeological space
with plots given by those parametrizations $\sigma \colon U \to A$
such that the post composition with the inclusion $A \to X$ is a plot of $X$.

\paragraph{\bf Quotients}
Let $p \colon X \to Y$ be a surjection from a diffeological space $X$
to a set $Y$.
Then $Y$ inherits from $X$ a diffeology consisting of those parametrizations
$\sigma \colon U \to Y$
which lifts locally, at every point $u \in U$, along $p$.

\paragraph{\bf Exponentials}
Given diffeological spaces $X$ and $Y$, the set $\hom_{\Diff}(X,Y)$
has a diffeology $D_{X,Y}$ consisting of those
$\sigma \colon U \to \hom_{\Diff}(X,Y)$ such that
for every plot $\tau \colon V \to X$ of $X$, the composite
\[
U \times V \xrightarrow{(\sigma,\tau)} \hom_{\Diff}(X,Y) \times X
\xrightarrow{\ev} Y
\]
is a plot of $Y$.
Putting it differently, $D_{X,Y}$ is the coarsest diffeology such that
the evaluation map $\hom_{\Diff}(X,Y) \times X \to Y$ is smooth.

\smallskip

Let us denote by $\Cinfty(X,Y)$ the diffeological space
$(\hom_{\Diff}(X,Y),D_{X,Y})$.
Then there is a natural map
$\alpha \colon \Cinfty(X \times Y,Z) \to \Cinfty(X,\Cinfty(Y,Z))$
given by the formula: $\alpha(f)(x)(y) = f(x,y)$ for $x \in X$ and
$y \in Y$.
The following exponential law implies the cartesian closedness of $\Diff$.

\begin{thm}[{\cite[1.60]{zemmour:diffeology}}]
  \label{thm:closedness_for_Diff}
  The map $\alpha$ induces a smooth isomorphism
  \[
  \Cinfty(X \times Y,Z) \cong \Cinfty(X,\Cinfty(Y,Z)).
  \]
\end{thm}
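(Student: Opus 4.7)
The plan is to verify the statement in three stages: first, that the set-theoretic map $\alpha$ actually lands in $\Cinfty(X,\Cinfty(Y,Z))$ and has a set-theoretic inverse $\beta$ given by $\beta(g)(x,y) = g(x)(y)$ which lands in $\Cinfty(X \times Y,Z)$; second, that $\alpha$ and $\beta$ are mutually inverse as maps of sets, which is immediate from the two formulas; and third, that both $\alpha$ and $\beta$ are smooth. The uniform tool throughout will be the universal property of the exponential diffeology recorded in the paragraph defining $D_{X,Y}$: a parametrization $\sigma \colon U \to \Cinfty(A,B)$ is a plot if and only if, for every plot $\tau \colon V \to A$, the assembled map $(u,v) \mapsto \sigma(u)(\tau(v))$ is a plot of $B$.

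For the first stage, that $\alpha(f)(x) \colon Y \to Z$ is smooth for each $x$ follows by composing any plot $\tau$ of $Y$ with the constant plot of $X$ at $x$ into $f$, and that $\alpha(f)$ itself is smooth into the exponential follows by feeding product plots $\sigma \times \tau \colon U \times V \to X \times Y$ into $f$. The subtler check is that $\beta(g)$ is smooth. Given $g \colon X \to \Cinfty(Y,Z)$ smooth and a plot $\rho = (\rho_1,\rho_2) \colon W \to X \times Y$, the composite $g\rho_1 \colon W \to \Cinfty(Y,Z)$ is a plot, so by the exponential property applied to the plot $\rho_2$ of $Y$ the map $(w_1,w_2) \mapsto g(\rho_1(w_1))(\rho_2(w_2))$ is a plot $W \times W \to Z$; precomposing with the smooth diagonal $W \to W \times W$ yields $\beta(g) \circ \rho$, as needed.

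For the third stage, smoothness of $\alpha$ is obtained by invoking the exponential property twice: given a plot $\phi \colon U \to \Cinfty(X \times Y,Z)$ and plots $\sigma \colon V \to X$, $\tau \colon W \to Y$, showing $\alpha\phi$ is a plot reduces to checking that $(u,v,w) \mapsto \phi(u)(\sigma(v),\tau(w))$ is a plot of $Z$, which is the defining property of $\phi$ applied to the product plot $\sigma \times \tau$. For $\beta$ the same double unwinding applies, but once more requires the diagonal trick: given a plot $\psi \colon U \to \Cinfty(X,\Cinfty(Y,Z))$ and a plot $\mu = (\mu_1,\mu_2) \colon V \to X \times Y$, one feeds $\mu_1$ and $\mu_2$ successively through the two exponentials to obtain a plot defined on $U \times V \times V$, then restricts to the diagonal of the last two factors to recover $(u,v) \mapsto \psi(u)(\mu_1(v))(\mu_2(v))$.

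The conceptual obstacle throughout is essentially the same point, recurring twice: the universal property of the exponential diffeology is formulated with a genuinely two-variable domain, so establishing smoothness of $\beta(g)$, or of the $\beta$-adjoint of a plot into $\Cinfty(X,\Cinfty(Y,Z))$, on a plot of $X \times Y$ whose domain is a single Euclidean open set $V$ forces one to enlarge to $V \times V$ and pull back along the diagonal. Once this diagonal manoeuvre is isolated, the rest of the argument proceeds by mechanical, symmetric application of the exponential property in both directions.
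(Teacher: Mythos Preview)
The paper does not supply its own proof of this theorem; it is quoted from Iglesias-Zemmour with a bare citation to \cite[1.60]{zemmour:diffeology} and no argument is given in the text. There is therefore nothing in the paper itself to compare your proposal against.

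That said, your proof is correct and is essentially the standard one found in the cited source. The three-stage structure (well-definedness of $\alpha$ and $\beta$, set-theoretic bijectivity, smoothness of both directions), the systematic use of the plot-characterization of the exponential diffeology, and the diagonal manoeuvre $V \to V \times V$ to pass from a two-parameter plot back to a one-parameter plot of $X \times Y$ are exactly the ingredients of the argument in Iglesias-Zemmour's text. Nothing is missing and no step fails.
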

\section{Numerically generated spaces}
\label{sec:limits}

Given a topological space $X$, let $DX$ be the diffeological space with
the same underlying set as $X$ and with all continuous maps
from open subsets of Euclidean spaces into $X$ as plots.
Clearly, a continuous map $f \colon X \to Y$ induces a smooth map $DX \to DY$.
Hence there is a functor $D \colon \Top \to \Diff$ which maps
a topological space $X$ to the diffeological space $DX$.

On the contrary, any diffeological space $X$ determines a topological space
$TX$ having the same underlying set as $X$ and is equipped with
the final topology with respect to the plots of $X$.
Any smooth map $f \colon X \to Y$ induces a continuous map $TX \to TY$,
hence we have a functor $T \colon \Diff \to \Top$.

\begin{prp}
  The functor $T$ is a left adjoint to $D$.
\end{prp}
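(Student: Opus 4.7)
The plan is to exhibit a bijection
\[
\hom_{\Top}(TX,Y) \;\cong\; \hom_{\Diff}(X,DY)
\]
natural in the diffeological space $X$ and the topological space $Y$, and observe that this bijection is the identity on underlying set-theoretic maps. Since $TX$ has the same underlying set as $X$ and $DY$ has the same underlying set as $Y$, both hom-sets sit inside $\hom_{\Set}(X,Y)$, so it suffices to show that a given function $f \colon X \to Y$ is continuous as a map $TX \to Y$ if and only if it is smooth as a map $X \to DY$.

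First I would unwind the two sides using the definitions of $T$ and $D$. Continuity of $f \colon TX \to Y$ means, by the universal property of the final topology, that for every plot $\sigma \colon U \to X$ the composite $f\sigma \colon U \to Y$ is continuous. On the other hand, smoothness of $f \colon X \to DY$ means that for every plot $\sigma \colon U \to X$ the composite $f\sigma \colon U \to DY$ is a plot of $DY$; but by the very definition of $DY$, the plots of $DY$ are precisely the continuous maps from open subsets of Euclidean spaces into $Y$. Comparing these two descriptions gives the desired equivalence of conditions, and hence the bijection.

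Naturality in $X$ and $Y$ is then automatic, since composition with a continuous map $Y \to Y'$ or a smooth map $X' \to X$ corresponds on both sides to ordinary composition of functions on underlying sets, and this commutes with the identification above.

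There is essentially no genuine obstacle here; the content of the argument is the observation that the plots of $DY$ are defined exactly so as to match the continuous parametrizations that detect the final topology on $TX$, so the adjunction is built into the definitions. The only mild subtlety worth stating explicitly is that the unit $X \to DTX$ and counit $TDY \to Y$ are the identities on underlying sets, which one may record to make the triangle identities transparent.
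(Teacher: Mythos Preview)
Your proof is correct and follows essentially the same approach as the paper: both argue that a set map $f \colon X \to Y$ is continuous $TX \to Y$ iff $f\sigma$ is continuous for every plot $\sigma$, which is exactly the condition for $f \colon X \to DY$ to be smooth. Your added remarks on naturality and the unit/counit are fine but not needed beyond what the paper records.
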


\begin{proof}
  Let $X$ be a diffeological space and $Y$ a topological space.
  Then a map $f \colon TX \to Y$ is continuous if and only if
  the composite $f\circ \sigma$ is continuous for every plot $\sigma$ of $X$.
  But this is equivalent to say that $f \colon X \to DY$ is smooth.
  Thus the natural map
  \[
  \hom_{\Top}(TX,Y) \to \hom_{\Diff}(X,DY)
  \]
  is bijective for every $X \in \Diff$ and $Y \in \Top$.
\end{proof}

\begin{prp}
  A topological space $X$ is numerically generated if and only if
  the counit of the adjunction $TDX \to X$ is a homeomorphism.
\end{prp}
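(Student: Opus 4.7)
The counit $\varepsilon_X \colon TDX \to X$ is, by construction, the identity on underlying sets, so the task reduces to comparing two topologies on the same set. My first observation is that the topology of $TDX$ is always at least as fine as the original topology of $X$: every plot of $DX$ is by definition a continuous map into $X$, and the topology of $TDX$ is the final topology with respect to these plots, hence the \emph{finest} topology making them all continuous. Consequently $\varepsilon_X$ is automatically continuous, and it is a homeomorphism precisely when $X$ already carries the final topology with respect to all continuous maps from open subsets of Euclidean spaces into $X$.

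The plan is then to show that this ``Euclidean-plot final topology'' coincides with the final topology with respect to singular simplexes, which is the one entering the definition of a numerically generated space. I would establish the two inclusions separately. In one direction, given a subset $V \subseteq X$ whose preimage is open under every singular simplex, and given a continuous $\sigma \colon U \to X$ with $U$ open in $\mathbf{R}^n$, I would pick any $u \in \sigma^{-1}(V)$, choose a small closed $n$-simplex $\Delta \subset U$ containing $u$ in its interior, and apply the hypothesis to the restriction $\sigma|\Delta$ (regarded as a singular simplex via an affine homeomorphism $\Delta^n \cong \Delta$); this produces an open neighborhood of $u$ in $U$ contained in $\sigma^{-1}(V)$, proving openness in $U$.

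The reverse direction is the step I expect to be the main subtlety, because $\Delta^n$ is not itself an open subset of Euclidean space and a singular simplex need not extend to any open neighborhood. To handle it I would exploit the fact that, when embedded in $\mathbf{R}^n$ as a compact convex set, $\Delta^n$ is a retract of $\mathbf{R}^n$; fix any continuous retraction $r \colon \mathbf{R}^n \to \Delta^n$. Given $V$ such that $\phi^{-1}(V)$ is open for every continuous $\phi$ from an open Euclidean set, and any singular simplex $\sigma \colon \Delta^n \to X$, the composite $\sigma \circ r \colon \mathbf{R}^n \to X$ is itself such a $\phi$, so $(\sigma r)^{-1}(V)$ is open in $\mathbf{R}^n$; intersecting with $\Delta^n$ and using $r|_{\Delta^n} = \id$ recovers $\sigma^{-1}(V)$, which is therefore open in $\Delta^n$. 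Combining the two inclusions yields the equality of the two final topologies, and the proposition follows immediately.
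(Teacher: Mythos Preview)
Your argument is correct and follows the same line as the paper's proof: both reduce the question to showing that the final topology with respect to continuous maps from open Euclidean subsets coincides with the final topology with respect to singular simplexes. The paper simply asserts this equivalence without justification, so your explicit verification via local simplexes in one direction and a retraction $r \colon \mathbf{R}^n \to \Delta^n$ in the other is a faithful elaboration of what the paper leaves implicit.
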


\begin{proof}
  The condition $TDX = X$ holds if and only if
  $X$ has the final topology with respect to all the continuous maps
  from an open subset of a Euclidean space into $X$.
  But this is equivalent to say that $X$ has the final topology
  with respect to the singular simplexes of $X$.
\end{proof}

Let us write $\nu = TD$, so that $X$ is numerically generated
if and only if $\nu{X} = X$ holds.

\begin{lmm}
  For any topological space $X$ we have $\nu(\nu{X}) = \nu{X}$.
\end{lmm}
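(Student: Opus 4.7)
The plan is to reduce the claim to the previous proposition and then directly verify its hypothesis for $\nu X$. Specifically, $\nu(\nu{X})=\nu{X}$ is equivalent to saying that $\nu{X}$ is numerically generated, i.e.\ that the counit $TD(\nu{X}) \to \nu{X}$ is a homeomorphism. Since $T$ and $D$ both preserve underlying sets, this counit is a continuous bijection, so the task reduces to showing that the two topologies on the common underlying set coincide: the final topology on $TDX$ with respect to continuous maps $U \to X$, and the final topology on $TD(TDX)$ with respect to continuous maps $U \to TDX$.

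The crux is then to identify these two classes of parametrizations. First, since the counit $\epsilon_X \colon TDX \to X$ is continuous (the topology on $TDX$ is by definition the finest making every continuous $U \to X$ continuous, so in particular refines that of $X$), any continuous map $\sigma \colon U \to TDX$ is also continuous as a map $\sigma \colon U \to X$. Conversely, any continuous map $\sigma \colon U \to X$ is by definition a plot of $DX$, and hence is continuous as a map $U \to TDX$ by the very definition of the final topology.

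It follows that the two final topologies are determined by the same class of maps, and therefore agree, giving $TDTDX = TDX$. Alternatively, one could deduce the same conclusion from the triangle identity $\epsilon T\cdot T\eta = \id_T$ applied at $DX$: this exhibits $T(\eta_{DX}) \colon TDX \to TDTDX$ as a continuous section of the counit $\epsilon_{TDX}$, and a continuous bijection admitting a continuous section is a homeomorphism.

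There is no serious obstacle here; the argument is a formal unwinding of the constructions of $T$ and $D$ together with the characterization of numerical generation proved just above. The only point requiring a moment's attention is verifying that the counit $\epsilon_X$ is continuous, which amounts to the trivial observation that every open set of $X$ pulls back to an open set along any continuous parametrization $U \to X$.
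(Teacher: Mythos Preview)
Your proof is correct and follows essentially the same line as the paper's: both arguments establish that the continuous parametrizations $U \to X$ and $U \to \nu X$ coincide, so $D(\nu X) = DX$ and hence $\nu(\nu X) = TD(\nu X) = TDX = \nu X$. The paper justifies the nontrivial inclusion by invoking $\nu U = U$ and functoriality of $\nu$, whereas you appeal directly to the definition of the final topology on $TDX$; your triangle-identity alternative is a pleasant categorical shortcut the paper does not mention, but the underlying content is the same.
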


\begin{proof}
  Every plot $\sigma \colon U \to X$ of $X$ lifts to a plot of $\nu{X}$,
  since $\nu{U} = U$ holds for any open subset $U$ of $\mathbf{R}^n$.
  Thus $\nu{X}$ has the same plots as $X$, and hence $\nu(\nu{X})$
  has the same topology as $\nu{X}$.
\end{proof}

It follows that $\NumGen$ is reflective in $\Top$,
and the correspondence $X \mapsto \nu{X}$ induces a reflector
$\nu \colon \Top \to \NumGen$.

\begin{prp}
  The category $\NumGen$ is complete and cocomplete.
  For every small diagram $F \colon J \to \NumGen$, we have
  \begin{eqnarray*}
    & \lim_J F \cong T(\lim_J DF) \cong \nu(\lim_J IF) \\
    & \colim_J F \cong T(\colim_J DF) \cong \colim_J IF
  \end{eqnarray*}
  where $I$ denotes the inclusion functor $\NumGen \to \Top$.
\end{prp}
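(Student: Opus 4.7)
The strategy is to exploit the two adjunctions already established: $T \dashv D$ between $\Top$ and $\Diff$, and the reflection $\nu \dashv I$ between $\NumGen$ and $\Top$. The guiding principle is that $TDX = X$ for every $X \in \NumGen$ (so that $\nu$ restricts to the identity on $\NumGen$), which allows universal properties in $\NumGen$ to be reduced to universal properties in $\Diff$ (for limits) or $\Top$ (for colimits). Completeness and cocompleteness of $\Top$ and $\Diff$ will be used without further comment.

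For limits I would put $L := T(\lim_J DF)$, with the limit formed in $\Diff$. For any $X \in \NumGen$, repeated use of $T \dashv D$ together with $TDX = X$ gives
\[
\hom_{\NumGen}(X,L) \cong \hom_{\Diff}(DX,\lim_J DF) \cong \lim_J \hom_{\Diff}(DX,DF(j)) \cong \lim_J \hom_{\NumGen}(X,F(j)),
\]
identifying $L$ with $\lim_J F$. Since $D$ is a right adjoint it preserves limits, so $\lim_J DF \cong D(\lim_J IF)$, and therefore $L \cong TD(\lim_J IF) = \nu(\lim_J IF)$; this yields both formulas for the limit. For colimits, the natural candidate is $T(\colim_J DF)$, and the dual Yoneda-style computation (this time using $TDF(j) = F(j)$ because $F(j) \in \NumGen$) verifies its universal property. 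Because $T$ is a left adjoint, $T(\colim_J DF) \cong \colim_J TDF = \colim_J F$ in $\Top$; to match this to $\colim_J IF$ it remains to show that the colimit of $IF$ taken in $\Top$ is already numerically generated.

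This last point is the main obstacle. I would settle it by transitivity of final topologies. Write $X := \colim_J IF$: by the construction of colimits in $\Top$, $X$ carries the final topology with respect to the canonical maps $F(j) \to X$, while each $F(j)$, being numerically generated, carries the final topology with respect to continuous maps $\sigma \colon U \to F(j)$ from open subsets of Euclidean spaces. Composing the two presentations, $X$ carries the final topology with respect to the family of all composites $U \to F(j) \to X$. This family is contained in the larger family of all continuous maps from open Euclidean subsets into $X$, and every member of the larger family is continuous for the existing topology of $X$; a standard lattice-of-topologies argument then shows that the final topologies with respect to the two families coincide. Hence $X$ has the final topology with respect to all continuous maps from open subsets of Euclidean spaces, i.e.\ $X \in \NumGen$, so $\colim_J F \cong \colim_J IF$. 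This completes the three identifications and, together with the preceding paragraph, establishes completeness and cocompleteness of $\NumGen$.
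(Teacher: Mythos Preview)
Your proof is correct and follows essentially the same route as the paper: both transport (co)limits from $\Diff$ via the adjunction $T \dashv D$ together with $TDX = X$, and then invoke that $D$ preserves limits and $T$ preserves colimits to identify the results with $\nu(\lim_J IF)$ and $\colim_J IF$. Your hom-set chain is simply the Yoneda reformulation of the paper's direct cone verification.

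Two small remarks. First, the step $\hom_{\NumGen}(X,T(\lim_J DF)) \cong \hom_{\Diff}(DX,\lim_J DF)$ is not literally an instance of $T \dashv D$ (the adjunction goes the other way); what makes it work is that a map $DX \to \lim_J DF$ is smooth iff each component $DX \to DF(j)$ is, and that in turn is equivalent to continuity of $X \to F(j)$ because $TDX = X$ and $TDF(j)=F(j)$. Unpacking this is exactly the paper's cone argument, so nothing is wrong, but the phrase ``repeated use of $T \dashv D$'' undersells what is happening. Second, your final-topology argument that $\colim_J IF \in \NumGen$ is valid but superfluous: you have already shown $T(\colim_J DF)$ is the colimit in $\NumGen$ and then that $T(\colim_J DF) \cong \colim_J IF$ in $\Top$, so $\colim_J IF$ is homeomorphic to an object of $\NumGen$ and hence lies in $\NumGen$. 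The paper stops at that point.
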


\begin{proof}
  Since $\Diff$ is complete, the diagram $DF \colon J \to \Diff$
  has a limiting cone $\{ \phi_j \colon \lim_J DF \to DF(j) \}$.
  We shall show that the cone
  \[
  \{ T\phi_j \colon T({\lim}_J DF) \to TDF(j) = F(j) \}
  \]
  is a limiting cone to $F$.
  Let $\{ \psi_j \colon X \to F(j) \}$ be an arbitrary cone to $F$.
  Then $\{ D\psi_j \colon DX \to DF(j) \}$ is a cone to $DF$,
  hence there is a unique morphism $u \colon DX \to \lim_J DF$
  such that $D\psi_j = \phi_j\circ u$ holds.
  But then
  \( Tu \colon X = TDX \to T(\lim_J DF) \)
  is a unique morphism such that $\psi_j = T\phi_j \circ Tu$ holds.
  Hence $\{ T\phi_j \}$ is a limiting cone to $F$.
  Since the right adjoint functor $D$ preserves limits, we have
  \( T(\lim_J DF) \cong TD(\lim_J IF) = \nu(\lim_J IF). \)

  Similar argument shows that $T(\colim_J DF)$ is a colimit of $F$.
  But in this case we have
  $T(\colim_J DF) \cong \colim_J \nu F = \colim_J IF$,
  because the left adjoint functor $T$ preserves colimits.
\end{proof}
\section{Exponentials in $\NumGen$}

A map $f \colon X \to Y$ between topological spaces is said to be
numerically continuous if the composite
$f\circ \sigma \colon \Delta^n \to Y$ is continuous
for every singular simplex $\sigma \colon \Delta^n \to X$.
Clearly, we have the following.

\begin{prp}
  \label{prp:numerical continuity}
  Let $f \colon X \to Y$ be a map between topological spaces.
  Then the following conditions are equivalent:
  \begin{enumerate}
  \item $f \colon X \to Y$ is numerically continuous.
  \item $f\circ \sigma \colon U \to Y$ is continuous
    for any continuous map $\sigma \colon U \to X$
    from an open subset $U$ of a Euclidean space into $X$.
  \item $f \colon \nu{X} \to Y$ is continuous.
  \item $f \colon DX \to DY$ is smooth.
  \end{enumerate}
\end{prp}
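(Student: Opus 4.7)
The strategy is to prove $(4) \Leftrightarrow (2) \Leftrightarrow (3)$ directly from the constructions of $D$ and $\nu = TD$, and then to bridge $(1) \Leftrightarrow (2)$ by a local argument. Condition (2) plays the role of a convenient hub, since it matches each of (1), (3), and (4) almost on the nose.

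For $(2) \Leftrightarrow (4)$ I would just unpack the definition of $D$: the plots of $DX$ and $DY$ are, by construction, precisely the continuous maps from open subsets of Euclidean spaces, so smoothness of $f \colon DX \to DY$ says exactly that $f \circ \sigma \colon U \to Y$ is continuous for every continuous $\sigma \colon U \to X$ with $U$ open in some $\mathbf{R}^n$. For $(2) \Leftrightarrow (3)$ I would invoke the definition of $T$: by construction, $\nu X = TDX$ carries the final topology with respect to the plots of $DX$, so continuity of $f \colon \nu X \to Y$ is equivalent to continuity of $f \circ \sigma$ for every such plot.

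For $(1) \Leftrightarrow (2)$ I would compare singular simplexes (defined on the closed set $\Delta^n$) with plots (defined on open subsets of Euclidean spaces). To get $(1) \Rightarrow (2)$, given a plot $\sigma \colon U \to X$ and a point $u \in U$, pick a closed simplex $K \subset U$ homeomorphic to $\Delta^n$ whose interior contains $u$; then $\sigma|_K$ is, up to reparametrization, a singular simplex, so (1) yields continuity of $f \circ \sigma$ in a neighborhood of $u$. Conversely, to get $(2) \Rightarrow (1)$, embed $\Delta^n$ in some $\mathbf{R}^N$ as a closed convex subset and let $r \colon \mathbf{R}^N \to \Delta^n$ be the nearest-point retraction, which is continuous and restricts to the identity on $\Delta^n$. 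For any singular simplex $\sigma \colon \Delta^n \to X$, the composite $\sigma \circ r \colon \mathbf{R}^N \to X$ is then a plot, so $f \circ \sigma \circ r$ is continuous by (2); restricting to $\Delta^n \subset \mathbf{R}^N$, where $r = \id$, gives the continuity of $f \circ \sigma$.

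The only step that is not a pure unpacking of constructions is this extension via the retraction $r$, and it is the small technical point I would flag as the main obstacle; everything else follows directly from the definitions of $D$, $T$, and $\nu$ recalled in the previous section. Once this asymmetry between closed simplexes and open Euclidean domains is resolved, the four conditions close into a single equivalence through the hub (2).
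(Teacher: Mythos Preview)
Your argument is correct. The equivalences $(2)\Leftrightarrow(3)\Leftrightarrow(4)$ are indeed immediate from the definitions of $D$, $T$, and $\nu=TD$, and your bridge $(1)\Leftrightarrow(2)$ via a local simplex neighborhood in one direction and the nearest-point retraction $r\colon\mathbf{R}^N\to\Delta^n$ in the other is a clean way to pass between parametrizations on closed simplexes and on open Euclidean domains.

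As for comparison with the paper: there is nothing to compare. The paper does not prove this proposition at all; it is introduced with the words ``Clearly, we have the following'' and left without proof. Your write-up is exactly the kind of routine unpacking the authors presumably had in mind, with the retraction trick for $(2)\Rightarrow(1)$ being the one small step that is not a literal restatement of a definition. If anything, you have supplied what the paper omitted.
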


Let us denote by $\map(X,Y)$ the set of continuous maps from $X$ to $Y$
equipped with the compact-open topology,
and let $\smap(X,Y)$ be the set of numerically continuous maps
equipped with the initial topology with respect to the maps
\[
\sigma^* \colon \smap(X,Y) \to \map(\Delta^n,Y),\quad
\sigma^*(f) = f\circ \sigma,
\]
where $\sigma \colon \Delta^n \to X$ runs through singular simplexes of $X$.
More explicitly, the space $\map(X,Y)$ has a subbase consisting of
those subsets
\[
W(K,U) = \{ f \mid f(K) \subset U \},
\]
where $K$ is a compact subset of $X$ and $U$ is an open subset of $Y$.
On the other hand, $\smap(X,Y)$ has a subbase consisting of those subsets
\[
W(\sigma,L,U) = \{ f \mid f(\sigma(L)) \subset U \},
\]
where $\sigma \colon \Delta^n \to X$ is a singular simplex,
$L$ a compact subset of $\Delta^n$, and $U$ an open subset of $Y$.
As we have
\[
W(\sigma,L,U) \cap \map(X,Y) = W(\sigma(L),U),
\]
the inclusion map $\map(X,Y) \to \smap(X,Y)$ is continuous.

\begin{prp}
  \label{prp:smap is map for numerically generated space}
  The inclusion map $\map(X,Y) \to \smap(X,Y)$ is bijective
  for all $Y$ if, and only if, $X$ is numerically generated.
\end{prp}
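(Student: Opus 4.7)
The plan is to observe first that the map $\map(X,Y)\to\smap(X,Y)$ is, on underlying sets, literally the inclusion of continuous functions into numerically continuous ones, and is therefore always injective. Bijectivity for all $Y$ is consequently equivalent to asserting that every numerically continuous map out of $X$ is in fact continuous, and I will establish both implications in this reformulation.

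For the forward direction, suppose $X$ is numerically generated, so that $\nu{X} = X$. By the equivalence $(1)\Leftrightarrow(3)$ of \prpref{prp:numerical continuity}, a numerically continuous $f\colon X\to Y$ is the same data as a continuous map $\nu{X}\to Y$, which, because $\nu{X} = X$, is just a continuous map $X\to Y$. Hence $\map(X,Y) = \smap(X,Y)$ set-theoretically, and the inclusion is bijective.

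For the converse, the natural move is to test the hypothesis at $Y=\nu{X}$. The key observation will be that the set-theoretic identity $\id\colon X \to \nu{X}$ is numerically continuous: for any continuous $\sigma\colon U\to X$ from an open subset of a Euclidean space, $\sigma$ is, by the very definition of $\nu{X}=TDX$ as carrying the final topology with respect to such maps, still continuous as a map into $\nu{X}$; hence $\id\circ\sigma$ is continuous, and condition (2) of \prpref{prp:numerical continuity} delivers numerical continuity of $\id$. Applying the hypothesis then forces $\id\colon X\to\nu{X}$ to be continuous. The reverse identity $\nu{X}\to X$ is continuous because the final topology on $\nu{X}$ is at least as fine as the original topology on $X$. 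Combining, $X$ and $\nu{X}$ agree as topological spaces, and $X$ is numerically generated.

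The one step that requires a moment of attention, rather than a genuine obstacle, is the verification that the set-theoretic identity $X\to\nu{X}$ is numerically continuous; once this is granted, both implications are direct unwindings of \prpref{prp:numerical continuity} and the defining universal property of the final topology on $\nu{X}$.
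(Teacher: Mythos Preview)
Your proof is correct and follows essentially the same route as the paper: both directions use \prpref{prp:numerical continuity}, and for the converse both test the hypothesis at $Y=\nu X$ to conclude that the set-theoretic identity $X\to\nu X$ is continuous. You are simply more explicit than the paper in verifying that this identity is numerically continuous (the paper just calls it ``the unit of the adjunction'' and takes its numerical continuity for granted).
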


\begin{proof}
  If $X$ is numerically generated then a numerically continuous map
  $f \colon X \to Y$ is automatically continuous.
  Hence $\map(X,Y) \to \smap(X,Y)$ is surjective for all $Y$.
  Conversely, if $\map(X,\nu{X}) \to \smap(X,\nu{X})$ is surjective,
  then the unit of the adjunction $X \to \nu{X}$ is continuous,
  implying that $X$ is numerically generated.
\end{proof}

\begin{prp}
  \label{prp:smap is map for CW-complexes}
  Suppose that $X$ is a CW-complex.  Then the inclusion map
  $\map(X,Y) \to \smap(X,Y)$ is a homeomorphism for any $Y$.
\end{prp}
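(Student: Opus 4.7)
The plan is to verify that the inclusion $\map(X,Y) \to \smap(X,Y)$, already known to be continuous, is both bijective and has a continuous inverse when $X$ is a CW-complex. For bijectivity, by \prpref{prp:smap is map for numerically generated space} it suffices to show that any CW-complex is numerically generated. This is immediate: the topology of $X$ is the final topology with respect to the characteristic maps $\chi_\alpha \colon D^{n_\alpha} \to X$, and after fixing a homeomorphism $\Delta^n \cong D^n$ each $\chi_\alpha$ becomes a singular simplex. Thus the final topology with respect to singular simplexes is at least as fine as the CW-topology, and since it is also coarser (characteristic maps being continuous) the two topologies agree.

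For continuity of the inverse, I would reduce to proving that every subbasic open set $W(K,U)$ of $\map(X,Y)$, with $K \subset X$ compact and $U \subset Y$ open, is open in $\smap(X,Y)$. The crucial input is the standard fact that a compact subset $K$ of a CW-complex is contained in a finite subcomplex, so $K$ is covered by finitely many closed cells $\bar{e}_1,\dots,\bar{e}_m$. Setting $K_i = K \cap \bar{e}_i$ gives a finite decomposition $K = \bigcup_{i=1}^m K_i$ into compact pieces, and correspondingly
\[
W(K,U) = \bigcap_{i=1}^m W(K_i,U).
\]

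Each $K_i$ is the image of a compact set under a characteristic map: if $L_i := \chi_i^{-1}(K_i) \subset D^{n_i}$, then $L_i$ is compact and $\chi_i(L_i) = K_i$ (using surjectivity of $\chi_i$ onto $\bar{e}_i$). After fixing a homeomorphism $\Delta^{n_i} \cong D^{n_i}$, the map $\chi_i$ becomes a singular simplex $\sigma_i \colon \Delta^{n_i} \to X$ and $L_i$ corresponds to a compact $L_i' \subset \Delta^{n_i}$ with $\sigma_i(L_i') = K_i$. Consequently $W(K_i,U) = W(\sigma_i,L_i',U)$, a subbasic open of $\smap(X,Y)$, and hence $W(K,U)$ is a finite intersection of such subbasic opens, proving that the inverse is continuous.

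The argument rests entirely on two well-known facts about CW-complexes: that the topology is determined by the characteristic maps and that every compact subset lies in a finite subcomplex. The only mild subtlety is the passage from the cellular description (characteristic maps on disks) to the simplicial description (singular simplexes on $\Delta^n$); once a homeomorphism $\Delta^n \cong D^n$ is chosen this is mere relabeling, so I do not anticipate any real obstacle beyond this bookkeeping.
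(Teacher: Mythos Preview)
Your argument is correct and follows essentially the same route as the paper: bijectivity from the fact that the CW topology is final with respect to the characteristic maps (viewed as singular simplexes), and openness of $W(K,U)$ by covering $K$ with finitely many closed cells and writing $W(K,U)$ as a finite intersection of subbasic opens $W(\sigma_i,L_i,U)$. The only cosmetic difference is that the paper takes characteristic maps with domain $\Delta^{n_i}$ from the outset, whereas you make the identification $\Delta^n \cong D^n$ explicit.
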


\begin{proof}
  Since $X$ has the weak topology with respect to the family of closed cells,
  a map $f$ from $X$ to $Y$ is continuous if and only if
  it is numerically continuous.
  Hence 
  $\map(X,Y) \to \smap(X,Y)$ is a bijection.
  To prove the continuity of its inverse, we have to show
  that every subset of the form $W(K,U)$ is open in $\smap(X,Y)$.
  Since $X$ is closure-finite, $K$ is contained in a finite complex, say $A$.
  Let $\{ e_1,\dots,e_k \}$ be the set of cells of $A$,
  and let $L_i = \psi_i^{-1}(\overline{e_i} \cap K) \subset \Delta^{n_i}$,
  where $\psi_i \colon \Delta^{n_i} \to X$ is a characteristic map for $e_i$.
  Then we have
  \[
  W(K,U) = W(\psi_1,L_1,U) \cap \cdots \cap W(\psi_k,L_k,U).
  \]
  Hence $W(K,U)$ is open in $\smap(X,Y)$.
\end{proof}

The proposition above implies that any CW-complex $X$ is numerically generated.
Thus we have the following.

\begin{crl}
  The category $\NumGen$ contains all CW-complexes.
\end{crl}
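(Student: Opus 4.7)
The plan is simply to chain together the two propositions immediately preceding the corollary. Proposition~\ref{prp:smap is map for CW-complexes} asserts that for any CW-complex $X$ and any topological space $Y$, the inclusion $\map(X,Y) \to \smap(X,Y)$ is a homeomorphism, and in particular a bijection. Proposition~\ref{prp:smap is map for numerically generated space} provides a characterization of numerically generated spaces precisely in terms of this bijectivity holding for all $Y$. Combining the two yields the corollary at once: a CW-complex $X$ meets the hypothesis of \prpref{prp:smap is map for numerically generated space}, so $X$ is numerically generated, and therefore $X$ is an object of $\NumGen$.

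The only thing to be careful about is the direction of the quantifier ``for all $Y$'' in \prpref{prp:smap is map for numerically generated space}: the characterization requires bijectivity for every target space, not just for some particular one such as $Y=\nu X$. Since \prpref{prp:smap is map for CW-complexes} already delivers the stronger assertion that the inclusion is a homeomorphism for an arbitrary $Y$, this quantifier is satisfied without extra work. No further argument is needed, and there is no genuine obstacle; the substance of the corollary is contained in \prpref{prp:smap is map for CW-complexes}, whose proof uses the closure-finiteness of CW-complexes to reduce an arbitrary compact subset $K \subset X$ to a finite subcomplex and to express $W(K,U)$ as a finite intersection of subbasic open sets of $\smap(X,Y)$.
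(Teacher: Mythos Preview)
Your argument is correct and matches the paper's own reasoning: the paper simply remarks that \prpref{prp:smap is map for CW-complexes} implies every CW-complex is numerically generated, which is exactly the chain through \prpref{prp:smap is map for numerically generated space} that you spell out. Nothing more is needed.
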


\begin{rem}
  If $X$ and $Y$ are CW-complexes then so is the product $X \times Y$ in
  $\NumGen$.  
  In fact, if $\{e_j \mid j \in J\}$ and $\{e'_k \mid k \in K\}$ are
  the sets of closed cells of $X$ and $Y$, respectively, then
  $X \times Y$ has the weak topology with respect to the set of closed cells
  $\{ e_j \times e'_k \mid (j,k) \in J \times K\}$.
  Thus the subcategory of $\NumGen$ consisting of CW-complexes is closed
  under finite products.
\end{rem}
  
For numerically generated spaces $X$ and $Y$, let us denote
\[
Y^X = \nu\smap(X,Y). 
\]
Then there is a map $\alpha \colon Z^{X \times Y} \to (Z^Y)^X$
which assigns to $f \colon X \times Y \to Z$ the map
$\alpha(f) \colon X \to Z^Y$ given by the formula
$\alpha(f)(x)(y) = f(x,y)$ for $x \in X$ and $y \in Y$.

\begin{thm}
  \label{thm:closedness_for_NG}
  The natural map $\alpha \colon Z^{X \times Y} \to (Z^Y)^X$
  is a homeomorphism.
\end{thm}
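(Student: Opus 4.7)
My plan is to reduce the claim to the cartesian closedness of $\Diff$ (\thmref{thm:closedness_for_Diff}) via the adjunction $T \dashv D$. The key is to establish, for $X, Y \in \NumGen$, two compatibilities of $D$ with the cartesian-closed structure:
\[
D(X \times Y) \;=\; DX \times DY \qquad\text{and}\qquad D\smap(X,Y) \;=\; \Cinfty(DX, DY),
\]
where the product on the left is taken in $\NumGen$, so that by the description of limits established above it equals $T(DX\times DY)$. Granting these, since $Y^X = \nu\smap(X,Y) = T D\smap(X,Y)$, we obtain $Y^X = T\Cinfty(DX, DY)$, and applying $T$ to the smooth isomorphism of \thmref{thm:closedness_for_Diff} yields
\[
Z^{X\times Y} = T\Cinfty(D(X\times Y), DZ) = T\Cinfty(DX \times DY, DZ) \cong T\Cinfty(DX, \Cinfty(DY, DZ)) = (Z^Y)^X.
\]
Because the smooth isomorphism in \thmref{thm:closedness_for_Diff} is also given by $f \mapsto (x \mapsto (y \mapsto f(x,y)))$, the resulting homeomorphism coincides with the map $\alpha$.

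The first identification follows by unpacking the final topology defining $T(DX\times DY)$ together with the continuity of the projections $T(DX\times DY) \to X, Y$: the plots of $DT(DX\times DY)$ are the continuous maps from open Euclidean subsets into $T(DX\times DY)$, and these are exactly the pairs of plots of $DX$ and $DY$, i.e., the plots of $DX\times DY$.

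The second identification is the main obstacle. By \prpref{prp:numerical continuity} the two diffeologies live on the common underlying set of numerically continuous maps. A plot $\sigma\colon U \to D\smap(X,Y)$ is, by the defining initial topology of $\smap(X,Y)$ and the local compactness of $\Delta^n$, precisely a map for which $(u,t)\mapsto \sigma(u)(\tau(t))$ is continuous on $U \times \Delta^n$ for every singular simplex $\tau\colon \Delta^n \to X$; a plot of $\Cinfty(DX,DY)$ instead demands the continuity of $(u,v)\mapsto \sigma(u)(\rho(v))$ on $U \times V$ for every continuous $\rho\colon V\to X$ from an open Euclidean $V$. One direction uses that $\Delta^n$ is a retract of a small open neighborhood in $\mathbf{R}^n$, so every singular simplex extends to a plot of $DX$; the other direction uses that $U\times V$ is itself numerically generated, so the continuity of $U \times V \to Y$ can be tested on singular simplices $\pi\colon \Delta^k \to U \times V$ and, writing $\tau = \rho \circ \pi_2$, reduces to the simplex condition via the continuous factorization $\Delta^k \xrightarrow{(\pi_1,\id)} U \times \Delta^k \to Y$.
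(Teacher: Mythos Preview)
Your proof is correct and follows the same route as the paper: both reduce to the cartesian closedness of $\Diff$ via the identities $D(X\times Y) = DX\times DY$ and $D\smap(X,Y) = \Cinfty(DX,DY)$, and then apply $T$ to the exponential law in $\Diff$. The paper isolates the second identity as a separate proposition (with a terser argument using only the homeomorphism $\map(U\times\Delta^m,Y)\cong\map(U,\map(\Delta^m,Y))$) and makes the last passage explicit as the ``numerical isomorphism'' $\smap(Y,Z)\to\nu\smap(Y,Z)$; your final equality $T\Cinfty(DX,\Cinfty(DY,DZ)) = (Z^Y)^X$ uses the same fact, namely $D\nu = D$, implicitly.
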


This clearly implies the following.

\begin{crl}
  The category $\NumGen$ is a cartesian closed category.
\end{crl}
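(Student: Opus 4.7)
My plan is to reduce the claim to the diffeological exponential law (\thmref{thm:closedness_for_Diff}) by applying the functor $D \colon \Top \to \Diff$ and then recovering the $\Top$-statement via the left adjoint $T$, exploiting the identity $\nu = TD$.

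The central technical step is the identification $D(Y^X) \cong \Cinfty(DX, DY)$ in $\Diff$ for any $X, Y \in \NumGen$. To establish it, first note that $D\nu = D$: for any $W \in \Top$, the diffeology of $DW$ consists of continuous maps $U \to W$ from Euclidean opens $U$, and the same holds for $D\nu W = DTDW$, since the topology of $\nu W = TDW$ is finer than but has the same continuous curves from Euclidean opens as $W$. Hence $D(Y^X) = D\nu\smap(X,Y) = D\smap(X,Y)$, which has the same underlying set as $\Cinfty(DX, DY)$ by \prpref{prp:numerical continuity}. A continuous map $U \to \smap(X,Y)$, unwinding the definition of the initial topology and using the compact-open exponential law on the compact Hausdorff factor $\Delta^n$, is equivalent to a family of continuous maps $U \times \Delta^n \to Y$ indexed by singular simplexes of $X$. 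A plot of $\Cinfty(DX, DY)$ is, by definition, equivalent to a family of continuous maps $U \times V \to Y$ indexed by continuous $V \to X$ from Euclidean opens. Since $X \in \NumGen$, the topology of $X$ is equally detected by either family of test maps, so these two diffeologies coincide.

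Granting this, I compute both sides by repeated application: using that $D$ preserves products (being right adjoint to $T$) together with $D\nu = D$,
\[
D(Z^{X \times Y}) \cong \Cinfty(DX \times DY, DZ), \qquad D((Z^Y)^X) \cong \Cinfty(DX, \Cinfty(DY, DZ)).
\]
By \thmref{thm:closedness_for_Diff}, these two are naturally isomorphic in $\Diff$ via a map whose underlying-set restriction is precisely $\alpha$. Applying $T$ and using that $TD = \nu$ fixes the numerically generated spaces $Z^{X \times Y}$ and $(Z^Y)^X$, we conclude that $\alpha$ is a homeomorphism.

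The main obstacle is the diffeological comparison $D\smap(X,Y) \cong \Cinfty(DX, DY)$, specifically matching the singular-simplex tests of $\smap$ with the Euclidean-plot tests of $\Cinfty$. This requires localizing plots over $\Delta^n$ to its interior (a Euclidean open) and handling boundary points via faces and the local structure of $\Delta^n$ as a manifold with corners; once this is verified, the remainder of the argument is a straightforward exercise in chasing the adjunction $T \dashv D$ and the equality $TD = \nu$.
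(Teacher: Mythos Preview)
Your proposal is correct and follows essentially the same route as the paper: the paper derives the corollary immediately from \thmref{thm:closedness_for_NG}, whose proof is exactly your chain of identifications---reduce to $\Diff$ via the key proposition $D\smap(X,Y) = \Cinfty(DX,DY)$, invoke the diffeological exponential law \thmref{thm:closedness_for_Diff}, and then pass back via $T$ and $\nu = TD$. The only difference is cosmetic: the paper proves $D\smap(X,Y) = \Cinfty(DX,DY)$ for \emph{all} topological $X,Y$ (so your hypothesis $X \in \NumGen$ is not needed there), and its argument for this identity---using only the standard exponential law $\map(U \times \Delta^m,Y) \cong \map(U,\map(\Delta^m,Y))$ for the locally compact factor $\Delta^m$---sidesteps the boundary/corner worries you flag in your final paragraph.
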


To prove the theorem, we use the relationship between $Y^X$ and
the exponentials in $\Diff$.

\begin{prp}
  For any topological spaces $X$ and $Y$, we have
  \[
  D\smap(X,Y) = \Cinfty(DX,DY).
  \]
\end{prp}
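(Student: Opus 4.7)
The plan is to verify that the two diffeological spaces have the same underlying set and the same plots. By \prpref{prp:numerical continuity} the smooth maps $DX \to DY$ are precisely the numerically continuous maps $X \to Y$, so $\hom_{\Diff}(DX,DY)$ coincides with the underlying set of $\smap(X,Y)$; it therefore suffices to show that the two diffeologies on this common set agree.

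Next I would unpack the two plot conditions. A map $\phi \colon U \to \smap(X,Y)$, with $U$ an open subset of some $\mathbf{R}^{k}$, is a plot of $D\smap(X,Y)$ iff it is continuous, which by the defining initial topology of $\smap(X,Y)$ amounts to the continuity of $\sigma^{*}\!\circ\phi \colon U \to \map(\Delta^{n},Y)$ for every singular simplex $\sigma \colon \Delta^{n} \to X$; since $\Delta^{n}$ is compact Hausdorff, the exponential law re-expresses this as: for every singular simplex $\sigma$, the map $U \times \Delta^{n} \to Y$, $(u,t) \mapsto \phi(u)(\sigma(t))$, is continuous. On the other hand, $\phi$ is a plot of $\Cinfty(DX,DY)$ precisely when, for every plot $\tau \colon V \to DX$ (that is, every continuous map from an open subset $V$ of a Euclidean space into $X$), the map $U \times V \to Y$, $(u,v) \mapsto \phi(u)(\tau(v))$, is continuous.

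It then remains to show that these two conditions on $\phi$ are equivalent. For the direction from $\Cinfty(DX,DY)$ to $D\smap(X,Y)$, I would use the closest-point retraction $\rho \colon \mathbf{R}^{n} \to \Delta^{n}$, which is well-defined because $\Delta^{n}$ is a closed convex subset of $\mathbf{R}^{n}$: given a singular simplex $\sigma \colon \Delta^{n} \to X$, the composite $\sigma\rho \colon \mathbf{R}^{n} \to X$ is continuous, and applying the hypothesis with $V = \mathbf{R}^{n}$ and then restricting to $U \times \Delta^{n}$ yields the required continuity. Conversely, given a continuous $\tau \colon V \to X$ with $V$ open and a point $(u_{0},v_{0}) \in U \times V$, I would pick a small closed Euclidean ball $B \subset V$ containing $v_{0}$ in its interior and a homeomorphism $\iota \colon \Delta^{m} \to B$; then $\tau\iota$ is a singular simplex, the hypothesis supplies continuity of $U \times \Delta^{m} \to Y$, and precomposing with $\mathrm{id}_{U} \times \iota^{-1}$ gives continuity on the open neighbourhood $U \times \mathrm{int}(B)$ of $(u_{0},v_{0})$.

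The main obstacle is precisely this last equivalence: bridging the gap between the closed simplices $\Delta^{n}$ that govern the topology of $\smap(X,Y)$ and the open subsets of Euclidean spaces that govern the plots of $DX$. The convex retraction $\mathbf{R}^{n} \to \Delta^{n}$ on one side and the local simplex parametrization of $V$ on the other are what effect this bridge.
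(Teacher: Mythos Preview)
Your proof is correct and follows the same route as the paper's: identify the underlying sets via \prpref{prp:numerical continuity}, use the exponential law $\map(U \times \Delta^m,Y) \cong \map(U,\map(\Delta^m,Y))$ to convert the initial-topology condition on $\smap(X,Y)$ into a joint-continuity statement, and compare it with the plot condition for $\Cinfty(DX,DY)$. The paper simply asserts the final equivalence ``which is equivalent to say that $\sigma$ is a plot of $\Cinfty(DX,DY)$'' without further comment, whereas you spell out the passage between simplices $\Delta^n$ and open subsets $V$ via the closest-point retraction and local ball parametrizations---so your argument is in fact more complete at precisely the point the paper elides.
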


\begin{proof}
  Let $\sigma \colon U \to \smap(X,Y)$ be a map from an open subset
  $U \subset \mathbf{R}^n$.  Then $\sigma$ is a plot of $D\smap(X,Y)$
  if and only if the composite
  \[
  U \xrightarrow{\sigma} \smap(X,Y) \xrightarrow{\tau^*}
  \map(\Delta^m,Y)
  \]
  is continuous for every singular simplex $\tau \colon \Delta^m \to X$.
  But $\tau^*\sigma$ corresponds to the the composite
  \[
  U \times \Delta^m \xrightarrow{\sigma \times \tau} \smap(X,Y) \times X
  \xrightarrow{\ev} Y,
  \]
  under the homeomorphism
  \[
  \map(U \times \Delta^m,Y) \cong \map(U,\map(\Delta^m,Y)).
  \]
  Thus $\sigma$ is a plot of $D\smap(X,Y)$ if and only if
  $\ev(\sigma,\tau)$ is continuous for every $\tau$.
  which is equivalent to say that $\sigma$ is a plot of
  $\Cinfty(DX,DY)$.
\end{proof}

We are now ready to prove Theorem~\ref{thm:closedness_for_NG}.

\begin{proof}[Proof of Theorem~\ref{thm:closedness_for_NG}]
  The map $\alpha$ is a homeomorphism because it coincides with
  the composite of homeomorphisms
  \begin{align*}
    Z^{X \times Y} &= \nu\smap(X \times Y,Z) && \\
    &= T\Cinfty(D(X \times Y),DZ) && \\
    &= T\Cinfty(DX \times DY,DZ) && \\
    &\cong T\Cinfty(DX,\Cinfty(DY,DZ)) && (1) \\
    &= T\Cinfty(DX,D\smap(Y,Z)) && \\
    &= \nu\smap(X,\smap(Y,Z)) && \\
    &\cong \nu\smap(X,\nu\smap(Y,Z)) = (Z^Y)^X && (2)
  \end{align*}
  in which (1) follows from Theorem~\ref{thm:closedness_for_Diff},
  and (2) is induced by the
  numerical isomorphism $\smap(Y,Z) \to \nu\smap(Y,Z)$.
\end{proof}



\section{The space of basepoint preserving maps}
\label{sec:pointed maps}

Let $\Top_0$ and $\Diff_0$ be the categories of pointed objects
in $\Top$ and $\Diff$, respectively.
Evidently, the adjunction $(T,D)$ between $\Top$ and $\Diff$ induces
an adjunction $(T_0,D_0)$  between $\Top_0$ and $\Diff_0$.
Thus the category $\NumGen_0$ of pointed objects in $\NumGen$
can be identified with a full subcategory of $\Top_0$ consisting of
those pointed spaces $(X,x_0)$ such that $\nu{X} = X$ holds.
Clearly, $\NumGen_0$ is complete and cocomplete.

Given pointed spaces $X$ and $Y$, let $\map_0(X,Y)$ and $\smap_0(X,Y)$
denote, respectively, the subspace of $\map(X,Y)$ and $\smap(X,Y)$
consisting of basepoint preserving maps.
Then Proposition~\ref{prp:smap is map for CW-complexes} implies the following.

\begin{prp}
  \label{prp:smap_0 is map_0 for CW-complexes}
  If $X$ is a pointed CW-complex, then the inclusion map
  $\map_0(X,Y) \to \smap_0(X,Y)$ is a homeomorphism for any pointed space $Y$.
\end{prp}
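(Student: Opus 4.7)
The plan is to reduce the claim directly to \prpref{prp:smap is map for CW-complexes}. By definition, $\map_0(X,Y)$ and $\smap_0(X,Y)$ are the subspaces of $\map(X,Y)$ and $\smap(X,Y)$, respectively, consisting of basepoint-preserving maps; so it suffices to verify that the homeomorphism from the unpointed version restricts to the inclusion in question.

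More concretely, the inclusion $i \colon \map(X,Y) \to \smap(X,Y)$ is the identity on underlying sets, and the basepoint-preserving condition $f(x_0) = y_0$ depends only on the underlying function, not on which topology is placed on the hom-set. Hence $i$ carries the subset $\map_0(X,Y)$ bijectively onto $\smap_0(X,Y)$. Since $i$ is a homeomorphism by \prpref{prp:smap is map for CW-complexes}, and since any homeomorphism restricts to a homeomorphism between corresponding subsets equipped with their subspace topologies, the inclusion $\map_0(X,Y) \to \smap_0(X,Y)$ is itself a homeomorphism.

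I do not expect any substantial obstacle: the statement is a formal corollary of the unpointed result once one records that the pointed mapping spaces carry the subspace topologies inherited from the unpointed ones. The only point worth mentioning is that, strictly speaking, one should also note that the basepoint-preserving condition is preserved under the bijection between underlying sets, which is immediate because the bijection is in fact the identity on underlying functions.
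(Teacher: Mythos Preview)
Your argument is correct and matches the paper's approach exactly: the paper simply states that \prpref{prp:smap is map for CW-complexes} implies the pointed version, without spelling out the details, and your proof is precisely the routine verification that the unpointed homeomorphism restricts to one between the pointed subspaces.
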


As before, let us denote $Y^X = \nu\smap_0(X,Y)$.
By taking the constant map as basepoint,
$Y^X$ is regarded as an object of $\NumGen_0$.
Recall that the smash product $X \wedge Y$ of pointed spaces
$X = (X,x_0)$ and $Y = (Y,y_0)$ is defined to be the quotient of
$X \times Y$ by its subspace
$X \vee Y = X \times \{y_0\} \cup \{x_0\} \times Y$.
We now define a pointed map
\[
\alpha_0 \colon Z^{X \wedge Y} \to (Z^Y)^X
\]
to be the composite
\(
Z^{X \wedge Y} \xrightarrow{p^*} (Z,z_0)^{(X \times Y,X \vee Y)}
\xrightarrow{\alpha'} (Z^Y)^X,
\)
where the middle term $(Z,z_0)^{(X \times Y,X \vee Y)}$ denotes
the subspace of $Z^{X \times Y}$ 
consisting of those maps $f \colon X \times Y \to Z$
such that $f(X \vee Y) = \{z_0\}$ holds, $p^*$ is induced by
the natural map $p \colon X \times Y \to X \wedge Y$,
and $\alpha'$ is the restriction of the homeomorphism
\[
\nu\smap(X \times Y,Z) \cong \nu\smap(X,\nu\smap(Y,Z)).
\]
Since $p \colon X \times Y \to X \wedge Y$ is universal among
continuous maps $f \colon X \times Y \to Z$ satisfying
$f(X \vee Y) = \{z_0\}$,
the induced map $p^*$ is bijective, hence so is $\alpha_0$.

\begin{prp}
  The map $\alpha_0 \colon Z^{X \wedge Y} \to (Z^Y)^X$ is
  a homeomorphism for any $X,\ Y,\  Z \in \NumGen_0$.
\end{prp}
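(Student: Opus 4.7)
I would prove this by mimicking the proof of Theorem~\ref{thm:closedness_for_NG}, realizing $\alpha_0$ as a chain of natural identifications that passes through $\Diff_0$ via the adjunction $(T,D)$. The preparatory step is to set up pointed analogs of the diffeological tools used in the unpointed case. For pointed diffeological spaces $A,B,C$, write $\Cinfty_0(A,B)\subset\Cinfty(A,B)$ for the subspace of basepoint-preserving smooth maps, and let $A\wedge B=(A\times B)/(A\vee B)$ denote the smash product in $\Diff_0$. Restricting the exponential isomorphism of Theorem~\ref{thm:closedness_for_Diff} to pointed maps yields a natural diffeological isomorphism
\[
\Cinfty_0(A\wedge B,C)\cong \Cinfty_0(A,\Cinfty_0(B,C)),
\]
and restricting the identity $D\smap(X,Y)=\Cinfty(DX,DY)$ gives $D\smap_0(X,Y)=\Cinfty_0(DX,DY)$ for $X,Y\in\Top_0$.

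The key comparison step is to show that, although $D$ need not preserve the quotient defining $X\wedge Y$, there is still a natural diffeological isomorphism
\[
\Cinfty_0(D(X\wedge Y),DZ)\cong \Cinfty_0(DX\wedge DY,DZ).
\]
Because $\NumGen$ is cartesian closed, the functor $U\times(-)$ preserves the colimit describing $X\wedge Y$ for every open $U\subset\mathbf{R}^n$, so by unwinding definitions a plot of either mapping space corresponds to a parametrization $U\to\hom(X\wedge Y,Z)$ whose adjoint $U\times X\times Y\to Z$ is numerically continuous and sends $U\times(X\vee Y)$ to the basepoint.

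Granting this identification, $\alpha_0$ factors as the chain of homeomorphisms
\begin{align*}
Z^{X\wedge Y}
&= T\Cinfty_0(D(X\wedge Y),DZ)\\
&\cong T\Cinfty_0(DX\wedge DY,DZ)\\
&\cong T\Cinfty_0(DX,\Cinfty_0(DY,DZ))\\
&= T\Cinfty_0(DX,D\smap_0(Y,Z))\\
&\cong T\Cinfty_0(DX,D\nu\smap_0(Y,Z))=(Z^Y)^X,
\end{align*}
the last isomorphism arising because the numerical bijection $\smap_0(Y,Z)\to\nu\smap_0(Y,Z)$ induces an equality of the underlying diffeologies. Tracing this composite confirms that it sends $f$ to $x\mapsto(y\mapsto f[x,y])$, and so agrees with $\alpha_0$ on underlying sets.

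The main obstacle is the comparison step: since $D$ preserves products but not colimits, the identification of the two diffeological mapping spaces has to be argued carefully from the cartesian closedness of $\NumGen$. All other ingredients are formal pointed variants of results already established in the excerpt.
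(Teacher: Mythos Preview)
Your approach is correct and shares the paper's overall strategy: translate the problem into $\Diff$ via the adjunction $(T,D)$ and exploit the exponential law there. The difference lies in which sub-step is treated as the crux.

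The paper's proof does not write out the chain of identifications you give. Instead, it reduces to showing that for pointed diffeological spaces $A,B,C$ the map
\[
p^{*}\colon \Cinfty((A\wedge B,*),(C,c_{0}))\to \Cinfty((A\times B,A\vee B),(C,c_{0}))
\]
is a smooth isomorphism, and proves this by using the defining property of the quotient diffeology: every plot $\tau\colon V\to A\wedge B$ lifts locally along $p\colon A\times B\to A\wedge B$, so that the candidate inverse can be checked to be smooth plotwise. This is essentially a hands-on proof of the pointed exponential law in $\Diff_{0}$, which you take as a ``formal pointed variant'' of Theorem~\ref{thm:closedness_for_Diff}.

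Conversely, you isolate as the main obstacle the comparison $\Cinfty_{0}(D(X\wedge Y),DZ)\cong\Cinfty_{0}(DX\wedge DY,DZ)$, arguing it from cartesian closedness of $\NumGen$ (so that $U\times(-)$ preserves the smash quotient). The paper glosses over this comparison, effectively conflating $D(X\wedge Y)$ with the diffeological smash $DX\wedge DY$ when it applies its lemma with $A=DX$, $B=DY$, $C=DZ$.

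So the two proofs are complementary: the paper spells out the local-lifting argument you defer to, while you spell out the $D$-versus-quotient comparison the paper leaves implicit. Either presentation completes the proof; yours is arguably more explicit about where the subtlety with $D$ not preserving colimits enters.
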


\begin{proof}
  Given pairs of topological spaces $(X,A)$, $(Y,B)$, we have
  \[
  (Y,B)^{(X,A)} = T\Cinfty((DX,DA),(DY,DB)),
  \]
  where $\Cinfty((DX,DA),(DY,DB))$ is the subspace of $\Cinfty(DX,DY)$
  consisting of those smooth maps $f \colon DX \to DY$ satisfying
  $f(DA) \subset DB$.
  Thus, to prove that $\alpha_0$ is a homeomorphism,
  we need only show that for any pointed diffeological spaces
  $A$, $B$ and $C$, the bijection
  \[
  p^* \colon \Cinfty((A \wedge B,*),(C,c_0)) \to
  \Cinfty((A \times B,A \vee B),(C,c_0))
  \]
  induced by the natural map
  $p \colon A \times B \to A \wedge B = A \times B/A \vee B$
  is a smooth isomorphism.
  Here $* = p(A \vee B)$ is a basepoint of $A \wedge B$, and
  $c_0$ is a basepoint of $C$.
  To see that $(p^*)^{-1}$ is smooth, let us take a plot
  \(
  \sigma \colon U \to \Cinfty((A \times B,A \wedge B),(C,c_0))
  \)
  and show that $\widetilde\sigma = (p^*)^{-1}\cdot\sigma$ is a plot
  of $\Cinfty((A \wedge B,*),(C,c_0))$.
  By definition, this is the case if for any plot
  $\tau \colon V \to A \wedge B$ the composite
  \[
  U \times V \xrightarrow{\widetilde\sigma \times \tau}
  \Cinfty((A \wedge B,*),(C,c_0)) \times (A \wedge B) \xrightarrow{\ev} C
  \]
  is a plot of $C$.
  But for any $v \in V$ there exist a neighborhood $W$ of $v$ and
  a plot $\widetilde\tau \colon W \to A \times B$ such that
  $p\widetilde\tau = \tau|W$ hold.
  Therefore, the composite $\ev(\widetilde\sigma \times \tau)$ coincides,
  on $U \times W$, with the plot
  \[
  U \times W \xrightarrow{\widetilde\sigma \times \widetilde\tau}
  \Cinfty((A \times B,A \vee B),(C,c_0)) \times (A \times B)
  \xrightarrow{\ev} C.
  \]
  This implies that $\ev(\widetilde\sigma \times \tau)$ is locally,
  hence globally, a plot of $C$.
  Thus $\widetilde\sigma = (p^*)^{-1}\cdot\sigma$ is a plot of
  $\Cinfty((A \wedge B,*),(C,c_0))$.
\end{proof}

The proposition implies a natural bijection
\[
\hom_{\NumGen_0}(X \wedge Y,Z) \cong \hom_{\NumGen_0}(X,Z^Y).
\]
Hence we have the following.

\begin{crl}
  The category $\NumGen_0$ is a symmetric monoidal closed category
  with tensor product $\wedge$ and internal hom of the form $Z^Y$.
\end{crl}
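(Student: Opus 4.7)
The plan is to build the remaining symmetric monoidal structure on top of the hom-set adjunction $\hom_{\NumGen_0}(X \wedge Y,Z) \cong \hom_{\NumGen_0}(X,Z^Y)$ supplied by the preceding proposition, and to verify the coherence axioms by reducing them via the Yoneda lemma to formal identities of adjoint functors. The closedness assertion is already in hand, since the adjunction exhibits $(-)^Y$ as right adjoint to $(-) \wedge Y$; what remains is to exhibit a unit, a symmetry, and an associator for $\wedge$ satisfying the pentagon, hexagon and triangle axioms.

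First I would take $S^0 = \{0,1\}$ (basepoint $0$) as the unit. The quotient description $S^0 \wedge X = (S^0 \times X)/(S^0 \vee X)$ gives a natural map $S^0 \wedge X \to X$ by $[1,x] \mapsto x$, which is easily seen to be a homeomorphism in $\NumGen_0$; and the left-right unit compatibility is automatic. The symmetry $\tau_{X,Y} \colon X \wedge Y \to Y \wedge X$ is induced by the swap on $X \times Y$, which carries $X \vee Y$ onto $Y \vee X$; it is manifestly an involution.

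The subtle step is the associator, because in the full category $\Top_0$ the smash product famously fails to be associative (the obstruction being that the quotient map $X \times Y \times Z \to (X \wedge Y) \wedge Z$ need not identify all the required points). In $\NumGen_0$ this obstruction disappears, because $(-) \wedge Y$ is a left adjoint and therefore genuinely preserves colimits. Indeed, for any $W \in \NumGen_0$, iterating the adjunction gives natural isomorphisms
\[
\hom((X \wedge Y) \wedge Z,W) \cong \hom(X,(W^Z)^Y) \cong \hom(X \wedge (Y \wedge Z),W),
\]
so the Yoneda lemma furnishes a unique natural isomorphism $a_{X,Y,Z} \colon (X \wedge Y) \wedge Z \to X \wedge (Y \wedge Z)$. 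The same representability argument, applied to $\hom(S^0 \wedge X,W)$ and $\hom(X \wedge Y,W) \cong \hom(Y \wedge X,W)$, gives the unit isomorphism and symmetry once more at the level of representables; this will let us check the coherence diagrams uniformly.

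The main obstacle will be verifying the pentagon, hexagon and triangle identities, but since every structure map has been constructed by representability from the closed structure of $\NumGen_0$, each diagram translates, via Yoneda, to an equation between natural transformations of iterated internal-hom functors such as $W \mapsto ((W^Z)^Y)^X$. These in turn reduce to purely formal identities in any symmetric monoidal closed category, independent of the topology on the spaces involved, and are the standard consequences of the cartesian closed structure on $\NumGen$ together with the construction of the smash product as the pushout $\ast \leftarrow X \vee Y \to X \times Y$. Assembling these verifications completes the proof that $(\NumGen_0,\wedge,S^0,(-)^{(-)})$ is symmetric monoidal closed.
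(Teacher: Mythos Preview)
Your proposal is correct and in fact considerably more complete than the paper's own treatment. The paper records only the one-line observation that the preceding proposition yields the natural bijection
\[
\hom_{\NumGen_0}(X \wedge Y,Z) \cong \hom_{\NumGen_0}(X,Z^Y)
\]
and then states the corollary without further argument; the unit, symmetry, associator, and coherence conditions are left implicit. Your approach---extracting the associator from the iterated adjunction via Yoneda, and reducing the coherence diagrams to formal identities among internal-hom functors---is the standard way to make this rigorous, and it is exactly what is needed to justify the step the paper skips. The passing remark that the structure ultimately rests on the cartesian closedness of $\NumGen$ together with the pushout description of $\wedge$ is also apt: that is the cleanest way to see why the associativity obstruction present in $\Top_0$ vanishes here.
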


\begin{prp}
  {\rm (1)} For every pointed space $X$, the counit of the adjunction
  $\varepsilon \colon \nu{X} \to X$ is a weak homotopy equivalence.

  {\rm (2)} If $X \in \NumGen_0$, then the bijection
  \(
  \iota \colon \map_0(X,Y) \to \smap_0(X,Y)
  \)
  is a weak homotopy equivalence.
\end{prp}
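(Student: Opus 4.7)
For (1), the plan is to prove that the counit $\varepsilon \colon \nu X \to X$ induces a bijection $\hom_{\Top_0}(W,\nu X) \to \hom_{\Top_0}(W,X)$ for every $W \in \NumGen_0$, and then apply this with $W = S^n$ and with its pointed cylinder, both of which are numerically generated pointed CW-complexes by the corollary in Section~\ref{sec:limits}. The key lifting principle behind this bijection is that, for any $W \in \NumGen$ and any continuous map $f \colon W \to X$, one produces a continuous lift $\tilde f \colon W \to \nu X$ as follows: each singular simplex $\tau \colon \Delta^n \to W$ gives, via $f \circ \tau$, a plot of $DX$, which under the identity $\Delta^n = TD\Delta^n$ becomes a continuous map $\Delta^n \to \nu X$; since $W = \nu W$ carries the final topology with respect to its singular simplexes, these composites assemble into the desired lift. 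Passing to pointed homotopy classes then yields $\pi_n(\nu X) \cong \pi_n(X)$ for every $n \geq 0$.

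For (2), the plan is to reduce the claim to (1) by showing that $\nu\iota \colon \nu\map_0(X,Y) \to \nu\smap_0(X,Y)$ is a homeomorphism. Naturality of the counit of $\nu$ produces the commuting square
\[
\xymatrix{
\nu\map_0(X,Y) \ar[r]^{\nu\iota} \ar[d]_{\varepsilon} & \nu\smap_0(X,Y) \ar[d]^{\varepsilon} \\
\map_0(X,Y) \ar[r]^{\iota} & \smap_0(X,Y),
}
\]
whose vertical arrows are weak equivalences by (1); once $\nu\iota$ is proved to be a homeomorphism, the two-out-of-three property forces $\iota$ to be a weak equivalence. Since $\iota$ is already a bijection by Proposition~\ref{prp:smap is map for numerically generated space} and both spaces in the top row carry the final topology with respect to their singular simplexes, it is enough to verify that the singular simplexes of $\map_0(X,Y)$ and of $\smap_0(X,Y)$ correspond under $\iota$.

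By the classical exponential law for compact-open topologies, applicable because $\Delta^n$ is locally compact Hausdorff, a continuous map $\Delta^n \to \map_0(X,Y)$ is the same data as a continuous basepoint-preserving map $\Delta^n \times X \to Y$, whereas a continuous map $\Delta^n \to \smap_0(X,Y)$ yields only a set map $g \colon \Delta^n \times X \to Y$ whose restrictions $g \circ (\id \times \sigma)$ are continuous for every singular simplex $\sigma \colon \Delta^m \to X$. The main obstacle is to show that any such $g$ is automatically continuous on $\Delta^n \times X$, and I plan to dispatch it in two steps. First, $g$ is numerically continuous: every singular simplex $\phi = (\phi_1,\phi_2) \colon \Delta^k \to \Delta^n \times X$ factors via the diagonal as $(\id \times \phi_2) \circ (\phi_1 \times \id) \circ \Delta_{\Delta^k}$, and $g$ composed with the first factor is continuous by hypothesis. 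Second, $\Delta^n \times X$ is itself numerically generated: since $X = \nu X$ is realized in $\Top$ as the colimit of its singular simplexes and $\Delta^n \times (-)$ preserves colimits in $\Top$ as a left adjoint (by local compactness of $\Delta^n$), we obtain $\Delta^n \times X \cong \colim_\sigma(\Delta^n \times \Delta^m)$ in $\Top$, a colimit of finite CW-complexes which remains in $\NumGen$ by the proposition on colimits in Section~\ref{sec:limits}. Combining the two observations gives the required identification of singular simplexes, whence $\nu\iota$ is a homeomorphism and (2) follows.
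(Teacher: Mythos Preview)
Your argument for (1) is correct and is essentially the paper's proof, rephrased: the paper observes $\pi_0\map_0(S^n,X) \cong \pi_0\smap_0(S^n,X)$ via Proposition~\ref{prp:smap is map for CW-complexes} and uses the numerically continuous unit $X \to \nu X$ to invert $\varepsilon_*$, which is exactly your lifting bijection specialized to $W = S^n$ and its cylinder.

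For (2) your proof is also correct, but the route differs from the paper's. The paper does not show that $\nu\iota$ is a homeomorphism; instead it compares $n$-loops directly via the commuting square
\[
\xymatrix{
\map_0(S^n,\map_0(X,Y)) \ar[r]^{\iota_*} & \map_0(S^n,\smap_0(X,Y)) \\
\map_0(S^n \wedge X,Y) \ar[r]^{\cong} \ar[u]^{\alpha_0} & \smap_0(S^n \wedge X,Y) \ar[u]_{\cong}
}
\]
using the pointed exponential law $\alpha_0$ already established through the diffeological machinery, and then repeats with $I_+ \wedge S^n$ to identify homotopy classes. Your argument replaces this appeal to $\alpha_0$ by a direct, elementary verification that $\Delta^n \times X$ is numerically generated (local compactness of $\Delta^n$ plus the colimit description of $X$), which yields in one stroke that $\nu\iota$ is a homeomorphism and hence handles both loops and homotopies simultaneously. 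The paper's version is shorter because it cashes in the diffeological work done earlier; yours is more self-contained and makes explicit the underlying point-set fact (that $\NumGen$ is closed under product with a compact CW-complex) which the paper's diagram leaves implicit in the bottom isomorphism.
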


\begin{proof}
  (1) Since $S^n$ is a CW-complex, we have
  \[
  \pi_n(X,x) = \pi_0\map((S^n,e),(X,x)) \cong \pi_0\smap((S^n,e),(X,x))
  \]
  for every $x \in X$.
  Therefore, the numerically continuous map $\eta \colon X \to \nu{X}$
  induces the inverse 
  to $\varepsilon_* \colon \pi_n(\nu{X},x) \to \pi_n(X,x)$.

  (2)
  We have a commutative diagram
  \[
  \xymatrix{
    \map_0(S^n,\map_0(X,Y)) \ar[r]^{\iota_*}
    & \map_0(S^n,\smap_0(X,Y)) \ar@{=}[d] \\
    & \smap_0(S^n,\smap_0(X,Y)) \\
    \map(S^n \wedge X,Y) \ar[r]^{\cong} \ar[uu]^{\alpha_0}
    & \smap(S^n \wedge X,Y) \ar[u]_{\cong},
  }
  \]
  which shows that
  \[
  \iota_* \colon \map_0(S^n,\map_0(X,Y)) \to \map_0(S^n,\smap_0(X,Y))
  \]
  is bijective.
  Thus $\smap_0(X,Y)$ has the same $n$-loops as $\map_0(X,Y)$.
  Moreover, a similar diagram as above, but $S^n$ replaced by
  $I_+ \wedge S^n$, shows that $\smap_0(X,Y)$ has
  the same homotopy classes of $n$-loops as $\map_0(X,Y)$.
  It follows that
  \[
  \iota_* \colon \pi_n(\map_0(X,Y),f) \cong \pi_n(\smap_0(X,Y),f)
  \]
  is an isomorphism for every $f \in \map_0(X,Y)$ and $n \geq 0$.
\end{proof}

\begin{crl}
  For all $X,\; Y \in \NumGen_0$, the space $Y^X$ is weakly equivalent
  to the space of maps $\map_0(X,Y)$ equipped with the compact-open topology.
\end{crl}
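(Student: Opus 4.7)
The plan is to combine the two parts of the preceding proposition into a zig-zag of weak equivalences between $Y^X$ and $\map_0(X,Y)$ that factors through $\smap_0(X,Y)$.

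By definition $Y^X = \nu\smap_0(X,Y)$, so applying part (1) of the proposition to the pointed space $\smap_0(X,Y)$ yields a counit
\[
\varepsilon \colon Y^X = \nu\smap_0(X,Y) \to \smap_0(X,Y)
\]
which is a weak homotopy equivalence. On the other hand, since $X \in \NumGen_0$, part (2) of the proposition tells us that the natural inclusion
\[
\iota \colon \map_0(X,Y) \to \smap_0(X,Y)
\]
is also a weak homotopy equivalence.

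Thus we obtain a zig-zag
\[
Y^X \xrightarrow{\varepsilon} \smap_0(X,Y) \xleftarrow{\iota} \map_0(X,Y)
\]
in which both arrows are weak equivalences, which is exactly the statement of the corollary.

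There is essentially no obstacle here: the work has already been done in the preceding proposition, and the corollary is just the observation that its two parts can be chained together at the common object $\smap_0(X,Y)$. The only point worth noting is that the corollary asserts a weak equivalence (i.e., a zig-zag in the homotopy category), not an actual continuous map, which is what the construction naturally provides.
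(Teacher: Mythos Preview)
Your argument is correct and is exactly the deduction the paper intends: the corollary is stated without proof immediately after the proposition, and the zig-zag $Y^X \xrightarrow{\varepsilon} \smap_0(X,Y) \xleftarrow{\iota} \map_0(X,Y)$ you write down is precisely how parts (1) and (2) combine. The only minor remark is that the introduction records the equivalent zig-zag $Y^X \leftarrow \nu\map_0(X,Y) \rightarrow \map_0(X,Y)$ (apply $\nu$ to $\iota$ and use the counit on $\map_0(X,Y)$ instead), but this is the same idea passed through the commuting square of counits and does not constitute a different approach.
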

\section{Homology theories via enriched functors}
\label{sec:homology theories}

Let $\C_0$ be a full subcategory of $\NumGen_0$.
Then $\C_0$ is an enriched category over $\NumGen_0$ with hom-objects
$F_0(X,Y) = Y^X$.
A covariant functor $T$ from $\C_0$ to $\NumGen_0$ is called enriched
if the correspondence $F_0(X,Y) \to F_0(TX,TY)$,
which maps $f$ to $Tf$, is a morphism in $\NumGen_0$,
that is, a basepoint preserving continuous map. 
Similarly, a contravariant functor $T$ form $\C_0$ to $\NumGen_0$
is called enriched if the map $F_0(X,Y) \to F_0(TY,TX)$
is a morphisms in $\NumGen_0$.  

Let $T \colon \C_0 \to \NumGen_0$ be an enriched functor.
Then for any pointed map $f \colon X \wedge Y \to Z$ such that
$Y$ and $Z$ are objects of $\C_0$ the composite
\[
X \xrightarrow{\alpha_0(f)} F_0(Y,Z) \xrightarrow{T} F_0(TY,TZ)
\]
induces, by adjunction, a pointed map $X \wedge TY \to TZ$.
Similarly, an enriched cofunctor $T \colon \C_0^{\op} \to \NumGen_0$
assigns $X \wedge TZ \to TY$ as an adjunct to the composite
$X \to F_0(Y,Z) \to F_0(TZ,TY)$.

\begin{prp}
  \label{prp:continuity_means_homotopy_invariance}
  Enriched functors and cofunctors from $\C_0$ to $\NumGen_0$
  preserve homotopies.
\end{prp}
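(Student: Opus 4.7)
The plan is to transcribe a pointed homotopy into a path in the internal hom, push that path through the enrichment, and adjoin back. Concretely, a pointed homotopy between $f,g \colon X \to Y$ in $\NumGen_0$ is a pointed map $H \colon I_+ \wedge X \to Y$ satisfying $H(0,x) = f(x)$ and $H(1,x) = g(x)$. Applying the exponential adjunction
\[
\alpha_0 \colon Z^{A \wedge B} \xrightarrow{\cong} (Z^B)^A
\]
established in the preceding section with $A = I_+$, $B = X$, $Z = Y$, the homotopy $H$ corresponds bijectively to a pointed map
\[
\widehat{H} \colon I_+ \to F_0(X,Y) = Y^X,
\]
that is, a path from $f$ to $g$ in the internal hom. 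Note that basepoint-preservation of $\widehat{H}$ is automatic: the disjoint basepoint $+$ of $I_+$ is sent by $\widehat{H}$ to the map $x \mapsto H(+,x) = y_0$, which is the constant map serving as the basepoint of $Y^X$.

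For a covariant enriched functor $T$, the enrichment supplies a morphism $T_* \colon F_0(X,Y) \to F_0(TX,TY)$ in $\NumGen_0$. The composite
\[
I_+ \xrightarrow{\widehat{H}} F_0(X,Y) \xrightarrow{T_*} F_0(TX,TY)
\]
is again pointed, takes $0$ to $Tf$ and $1$ to $Tg$, and by adjoining back through $\alpha_0$ produces a pointed map $I_+ \wedge TX \to TY$, i.e.\ a pointed homotopy from $Tf$ to $Tg$. The contravariant case is entirely analogous: the enrichment yields a morphism $T^* \colon F_0(X,Y) \to F_0(TY,TX)$ in $\NumGen_0$, and the composite $T^* \circ \widehat{H}$, adjoined through $\alpha_0$, is a pointed homotopy $I_+ \wedge TY \to TX$ from $Tg$ to $Tf$.

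There is no serious obstacle: the argument is essentially a formal consequence of the closed structure of $\NumGen_0$ together with the hypothesis that $T$ acts on internal homs by a morphism (not merely a function) in $\NumGen_0$. The only point demanding care is basepoint bookkeeping, namely that $T_*$ (respectively $T^*$) carries the constant-map basepoint of $Y^X$ to the constant-map basepoint of $F_0(TX,TY)$ (respectively $F_0(TY,TX)$), which is exactly what ``morphism in $\NumGen_0$'' guarantees.
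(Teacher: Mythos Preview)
Your proof is correct and follows essentially the same route as the paper: the paper sets up, just before the proposition, the passage from a map $A \wedge Y \to Z$ to a map $A \wedge TY \to TZ$ via adjoining, composing with the enrichment $F_0(Y,Z) \to F_0(TY,TZ)$, and adjoining back, and then applies this with $A = I_+$. You spell out exactly this argument with more detail. One inconsequential slip: in the contravariant case your composite $T^* \circ \widehat{H}$ sends $0 \mapsto Tf$ and $1 \mapsto Tg$, so the resulting homotopy runs from $Tf$ to $Tg$, not from $Tg$ to $Tf$ as you wrote.
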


\begin{proof}
  Let $h \colon I_+ \wedge X \to Y$ be a pointed homotopy between
  $h_0$ and $h_1$.
  Then an enriched functor $T \colon \C_0 \to \NumGen_0$ induces
  a homotopy $I_+ \wedge TX  \to TY$ between $Th_0$ and $Th_1$.
  Similarly, an enriched cofunctor $T$ induces a homotopy
  $I_+ \wedge TY \to TX$ between $Th_0$ and $Th_1$.
\end{proof}

\begin{crl}
  If $T \colon \C_0 \to \NumGen_0$ is an enriched functor then
  a homotopy equivalence $f \colon X \to Y$ induces isomorphisms
  $Tf_* \colon \pi_n TX \cong \pi_n TY$ for $n \geq 0$.
  Similarly, if $T$ is an enriched cofunctor then $f$ induces isomorphisms
  $Tf_* \colon \pi_n TY \cong \pi_n TX$ for $n \geq 0$.
\end{crl}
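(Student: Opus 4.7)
The plan is to reduce the corollary to the previous proposition via the standard observation that a functor which preserves homotopies sends homotopy equivalences to homotopy equivalences, and then invoke the classical fact that homotopy equivalences of pointed spaces induce isomorphisms on $\pi_n$.

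More explicitly, let $f \colon X \to Y$ be a homotopy equivalence in $\C_0$ with homotopy inverse $g \colon Y \to X$, so that there are pointed homotopies $h \colon I_+ \wedge X \to X$ between $gf$ and $\id_X$, and $h' \colon I_+ \wedge Y \to Y$ between $fg$ and $\id_Y$. For the covariant case I would apply \prpref{prp:continuity_means_homotopy_invariance} to $h$ and $h'$ to obtain homotopies $Th \colon I_+ \wedge TX \to TX$ between $T(gf) = Tg \circ Tf$ and $T(\id_X) = \id_{TX}$, and $Th' \colon I_+ \wedge TY \to TY$ between $Tf \circ Tg$ and $\id_{TY}$; here I use the functoriality of $T$, which is built into the definition of an enriched functor. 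Thus $Tf$ is a (pointed) homotopy equivalence, with homotopy inverse $Tg$, and standard homotopy theory gives the isomorphisms $Tf_* \colon \pi_n TX \cong \pi_n TY$ for all $n \geq 0$.

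The contravariant case is entirely analogous: an enriched cofunctor converts $h$ and $h'$ into homotopies between $Tf \circ Tg$ and $\id_{TX}$, and between $Tg \circ Tf$ and $\id_{TY}$, so $Tf \colon TY \to TX$ is again a homotopy equivalence inducing isomorphisms on $\pi_n$.

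There is no genuine obstacle here; the only subtlety worth flagging is to make sure the homotopies produced by \prpref{prp:continuity_means_homotopy_invariance} really are \emph{pointed} homotopies of pointed maps in $\NumGen_0$, which is the case because the source of the homotopy is $I_+ \wedge X$ with canonical basepoint and $T$ is a pointed enriched functor. Once that is noted, the result follows formally.
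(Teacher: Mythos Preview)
Your argument is correct and is exactly the reasoning the paper leaves implicit by stating this as an immediate corollary of \prpref{prp:continuity_means_homotopy_invariance}: an enriched (co)functor preserves pointed homotopies, hence sends a homotopy equivalence to a homotopy equivalence, which induces isomorphisms on all $\pi_n$.
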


From now on, we assume that $\C_0$ satisfies the following conditions:
(i) $\C_0$ contains all finite CW-complexes.
(ii) $\C_0$ is closed under finite wedge sum. 
(iii) If $A \subset X$ is an inclusion of objects in $\C_0$ then
its cofiber $X \cup CA$ belongs to $\C_0$;
in particular, $\C_0$ is closed under the suspension functor
$X \mapsto \Sigma X$.
The category $\FCW_0$ of finite CW-complexes is a typical example of
such a category.

Given a continuous map $f \colon X \to Y$,
let
\[
E(f) = \{(x,l) \in X \times \map(I,Y) \mid f(x) = l(0)\}
\]
be the mapping track of $f$.
Then the map $p \colon E(f) \to Y$, $p(x,l) = l(1)$, has
the homotopy lifting property for all spaces, and hence induces a bijection
$p_* \colon \pi_{n+1}(E(f),F(f)) \to \pi_{n+1}Y$ for all $n \geq 0$,
where $F(f)$ denotes the fiber of $p$ at the basepoint of $Y$.
A sequence of pointed maps $Z \xrightarrow{i} X \xrightarrow{f} Y$ is called
a homotopy fibration sequence if there is a homotopy of pointed maps from
$f \circ i$ to the constant map such that the induced map $Z \to F(f)$
is a weak homotopy equivalence.

\begin{dfn}
  An enriched functor $T \colon \C_0 \to \NumGen_0$ is called linear
  if for every pair of objects $(X,A)$ with $A \subset X$, the sequence
  \[
  TA \to TX \to T(X \cup CA),
  \]
  induced by the cofibration sequence $A \subset X \subset X \cup CA$,
  is a homotopy fibration sequence with respect to the null homotopy
  of $TA \to T(X \cup CA)$ coming from the contraction of $A$ within
  the reduced cone $CA$.
  Likewise, an enriched cofunctor $T \colon \C_0^{\op} \to \NumGen_0$
  is called linear if the induced sequence
  \[
  T(X \cup CA) \to TX \to TA
  \]
  is a homotopy fibration sequence.
\end{dfn}

If $T$ is a linear functor then every pair $(X,A)$ gives rise to
an exact sequence of pointed sets
\[
\cdots \to
\pi_{n+1} T(X \cup CA) \xrightarrow{\varDelta} \pi_n TA
\xrightarrow{Ti_*} \pi_n TX \xrightarrow{Tj_*} \pi_n T(X \cup CA)
\to \cdots
\]
terminated at $\pi_0 T(X \cup CA)$.
Here $i$ and $j$ denote the inclusions $A \subset X$ and
$X \subset X \cup CA$, respectively, and
$\varDelta$ is the composite
\[
\pi_{n+1}T(X \cup CA) \xrightarrow{p_*^{-1}} \pi_{n+1}(E(Tj),F(Tj))
\xrightarrow{\partial} \pi_n F(Tj) \xrightarrow{\nu_*^{-1}} \pi_n TA.
\]
Similarly, a linear cofunctor $T$ induces an exact sequence 
\[
\cdots \to
\pi_{n+1} TA \xrightarrow{\varDelta} \pi_n T(X \cup CA)
\xrightarrow{Tj_*} \pi_n TX \xrightarrow{Ti_*} \pi_n TA
\to \cdots
\]
terminated at $\pi_0 TA$.

\begin{thm}
  \label{thm:homology_theory}
  For every linear functor $T \colon \C_0 \to \NumGen_0$,
  there exists a generalized homology theory $X \mapsto \{ h_n(X;T) \}$
  defined on $\C_0$ such that $h_n(X;T)$ is isomorphic to
  $\pi_n TX$ if $n \geq 0$, and to $\pi_0 T(\Sigma^{-n}X)$ otherwise.
\end{thm}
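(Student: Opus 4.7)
The plan is to set $h_n(X;T) = \pi_n TX$ for $n \geq 0$ and $h_n(X;T) = \pi_0 T(\Sigma^{-n}X)$ for $n < 0$, and to verify homotopy invariance, exactness for cofiber pairs, and the suspension isomorphism. The linchpin is a natural suspension isomorphism $\sigma_X \colon \pi_{n+1}T(\Sigma X) \xrightarrow{\cong} \pi_n TX$ valid for all $n \geq 0$, which simultaneously ties the two definitional cases together and lets one splice the non-negatively graded long exact sequence already displayed before the theorem into one indexed by all integers.

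My first step would be to show that $T(*)$ is weakly contractible. Linearity applied to the trivial pair $A = X = *$ yields a homotopy fibration sequence $T(*) \to T(*) \to T(*)$ whose middle map is the identity, so its homotopy fibre is a point and $T(*) \simeq *$. Since the reduced cone $CX$ is contractible and enriched functors preserve homotopy equivalences by the corollary to Proposition~\ref{prp:continuity_means_homotopy_invariance}, $T(CX)$ is weakly contractible for every $X \in \C_0$. Applying linearity to the cofibration $X \hookrightarrow CX$, whose cofibre is $\Sigma X$, the long exact sequence in homotopy collapses to give the desired natural isomorphism $\sigma_X \colon \pi_{n+1}T(\Sigma X) \xrightarrow{\cong} \pi_n TX$ for each $n \geq 0$; concretely $\sigma_X$ is the boundary map $\varDelta$ from the excerpt applied to the pair $(CX, X)$.

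Armed with $\sigma_X$, I would then check that $h_n(X;T) \cong \pi_{n+k}T(\Sigma^k X)$ canonically for any $k \geq \max(0,-n)$, which also makes the suspension axiom $h_n(X;T) \cong h_{n+1}(\Sigma X;T)$ tautological. Homotopy invariance of $h_n(-;T)$ is immediate from Proposition~\ref{prp:continuity_means_homotopy_invariance}. For the long exact sequence of a cofiber pair $A \hookrightarrow X$, the non-negative part is exactly the sequence already displayed; to extend it to every degree, I would apply that sequence to the suspended pairs $(\Sigma^k X, \Sigma^k A)$ and splice the results using $\sigma$, invoking naturality of the boundary map in the pair. The wedge axiom for finite wedges follows in turn from linearity applied to $X \hookrightarrow X \vee Y$ and the contractibility of $T(*)$.

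The principal obstacle is to verify these compatibilities rigorously. One must show that $\sigma_X$ is natural in $X$, that two consecutive applications of $\sigma$ commute (so that the isomorphism $h_n \cong \pi_{n+k} T(\Sigma^k X)$ does not depend on the choice of $k$), and that $\sigma$ intertwines the boundary map $\varDelta$ of an arbitrary cofibration with that of its suspension, so that the spliced sequences form a single doubly infinite exact sequence. All three reduce to functoriality of the mapping track construction combined with the enrichedness of $T$, which forces the action of $T$ on cones, suspensions, and homotopies to be compatible with the internal hom structure on $\NumGen_0$ set up in the earlier sections.
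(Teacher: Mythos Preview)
Your outline is correct and covers the same ground as the paper's proof, but you organise one key step differently.  In the paper the authors first prove that the map $T(X \vee Y) \to TX \times TY$ induced by the two projections is a weak equivalence --- using exactly your cofibration $X \hookrightarrow X \vee Y$ with cofibre $CX \vee Y \simeq Y$ --- and interpret this as saying that $\mathbf{k} \mapsto T(X \wedge \mathbf{k})$ is a special $\Gamma$-object.  This gives each $\pi_n TX$ an abelian \emph{monoid} structure coming from the fold map $\nabla \colon X \vee X \to X$, which an Eckmann--Hilton argument shows agrees with the usual one on $\pi_{\geq 1}$.  Only then do they use the pair $(CX,X)$ to obtain the suspension isomorphism $\varDelta \colon \pi_{n+1}T(\Sigma X) \to \pi_n TX$ and deduce that the monoid $\pi_0 TX$ is in fact a group.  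You take the shortcut of establishing the suspension bijection first and \emph{transporting} the abelian group structure from $\pi_{n+k}T(\Sigma^k X)$ back along iterated $\sigma$'s, letting naturality of $\sigma_X$ substitute for the $\Gamma$-space argument.

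Both routes are valid and yield the same group structure, since both are compatible with the native multiplication on $\pi_{n+1}T(\Sigma X)$.  Your approach is more economical for proving the theorem as stated; the paper's approach has the advantage of making explicit the infinite-loop-space structure on $TX$, which the authors exploit immediately afterwards when they discuss spectrification $LT$ and the orthogonal-spectrum structure on the derivatives $\partial TX$.  Your final paragraph correctly identifies the residual work: naturality of $\sigma_X$, independence of the choice of $k$, and the intertwining of $\sigma$ with the boundary $\varDelta$ for an arbitrary pair all reduce, as you say, to functoriality of the mapping-track construction together with enrichedness of $T$.
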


\begin{proof}
  We first show that
  the map $T(X \vee Y) \to TX \times TY$
  induced by the projections of $X \vee Y$ onto $X$ and $Y$
  is a weak equivalence.
  This means that the functor $\Gamma \to \NumGen_0$,
  which maps a pointed finite set $\mathbf{k} = \{0,1,\dots,k\}$ to
  $T(X \wedge \mathbf{k}) = T(X \vee \cdots \vee X)$, is special
  in the sense that the natural map $T(X \wedge \mathbf{k}) \to T(X)^k$
  is a weak equivalence for all $k \geq 0$.
  Hence $\pi_n TX$ is an abelian monoid
  with respect to the multiplication
  \[
  \pi_n TX \times \pi_n TX \cong \pi_n T(X \vee X)
  \xrightarrow{T\nabla_*} \pi_n TX
  \]
  induced by the folding map $\nabla \colon X \vee X \to X$.
  This multiplication coincides with the standard
  multiplication of $\pi_n TX$ because they are compatible with each other.
  In particular, $\pi_1 TX$ is an abelian group for all $X$.
  Moreover, any pointed map $f \colon X \to Y$ induces a natural
  transformation $T(X \wedge \mathbf{k}) \to T(Y \wedge \mathbf{k})$,
  hence a homomorphism of abelian monoids
  $Tf_* \colon \pi_n TX \to \pi_n TY$ for all $n \geq 0$.

  To see that $T(X \vee Y) \to TX \times TY$ is a weak equivalence,
  it suffices to show that the sequence $TX \to T(X \vee Y) \to TY$,
  induced by the inclusion $X \to X \vee Y$ and the projection
  $X \vee Y \to Y$, is a homotopy fibration sequence.
  But $TX \to T(X \vee Y) \to TY$ is homotopy equivalent to
  the homotopy fibration sequence $TX \to T(X \vee Y) \to T(CX \vee Y)$
  through the homotopy equivalence $T(CX \vee Y) \simeq TY$
  induced by the retraction $CX \vee Y \to Y$.

  Next consider the homotopy exact sequence associated with the sequence
  $TX \to T(CX) \to T(\Sigma X)$.
  As $T(CX)$ is weakly contractible, we obtain for every $n \geq 0$
  a short exact sequence of abelian monoids
  \[
  0 \to \pi_{n+1} T(\Sigma X) \xrightarrow{\varDelta} \pi_n TX \to 0.
  \]
  Since $\pi_{n+1} T(\Sigma X)$ is an abelian group,
  the homomorphism $\varDelta$ is injective, hence an isomorphism.
  But this in turn means that $\pi_n TX$ is an abelian group for $n \geq 0$.
  Therefore, $h_n(X; T)$ is an abelian group for all $n \in \mathbf{Z}$.

  For a pointed map $f \colon X \to Y$, we define
  \[
  h_n(f) \colon h_n(X;T) \to h_n(Y;T)
  \]
  to be the homomorphism induced by $Tf \colon TX \to TY$
  for $n \geq 0$, and $T(\Sigma^{-n}f)$ for $n < 0$.
  It is easy to see that the functor $X \mapsto \{h_n(X;T)\}$
  together with a natural isomorphism $h_n(X;T) \cong h_{n+1}(\Sigma X;T)$,
  given by $\varDelta^{-1}$ for $n \geq 0$ and the identity for $n < 0$,
  satisfies the homotopy and exactness axioms.
\end{proof}

\begin{exa}
  \label{example:homology theories}
  (1) Let $AG(X)$ denote the topological free abelian group generated by
  $X$ with basepoint identified with $0$.
  Then the correspondence $X \mapsto AG(X)$ defines a linear enriched
  functor $\NumGen_0 \to \NumGen_0$.
  By the Dold-Thom theorem, $h_n(X,AG)$ is isomorphic to the singular
  homology group for a CW-complex $X$.
  But this is not the case for general $X$.
  In fact, the singular homology theory cannot be represented by
  a linear enriched functor $T \colon \NumGen_0 \to \NumGen_0$,
  because the singular homology does not satisfy the wedge axiom
  (cf.\ \cite[6.11]{dold-thom}), contradicting to the fact that
  the homology theory given by a linear enriched functor must satisfy
  the axiom.

  (2) Labeled configuration spaces also give rise to linear enriched functors.
  Let $C^M(\mathbf{R}^{\infty})$ be the configuration space of
  finite points in $\mathbf{R}^{\infty}$ with labels in
  a partial abelian monoid $M$.
  Then $\Omega C^{\Sigma X \wedge M}(\mathbf{R}^{\infty})$ is
  a linear enriched functor of $X$,
  and hence defines a connective homology theory.  
  We have shown in \cite{config} that the classical homology,
  the stable homotopy, and the connective $K$-homology theories
  can be obtained in this way.
\end{exa}

If $T \colon \C_0 \to \NumGen_0$ is an enriched functor then
for every $X \in \C_0$ the spaces $T(\Sigma^n X)$ together with
the maps $S^1 \wedge T(\Sigma^n X) \to T(\Sigma^{n+1} X)$
induced by the homeomorphism $S^1 \wedge \Sigma^n X \cong \Sigma^{n+1} X$
form a prespectrum $\spec{TX}$. 
Following Goodwillie \cite{goodwillie_I}, we call $\spec{TX}$
the derivative of $T$ at $X$.
Let $L(\spec{TX})$ 
be the spectrification of $\spec{TX}$ (cf.\ \cite{lewis-may-steinberger}).
%
%
Then its zeroth space $L(\spec{TX})_0$ is an infinite loop space
and the correspondence $X \mapsto L(\spec{TX})_0$ defines
an enriched functor $LT \colon \C_0 \to \NumGen_0$.
If, moreover, $T$ is linear then Theorem~\ref{thm:homology_theory}
implies that the natural map $TX \to LTX$ is a weak equivalence
for every $X$; hence $LT$ defines the same homology theory as $T$.

An enriched functor $T$ is called stable if the natural map
$TX \to LTX$ is a homeomorphism for every $X$.
In particular, $LT$ is stable for any $T$.
Let $\SLEF$ be the category of stable linear enriched functors
$\C_0 \to \NumGen_0$ with enriched natural transformations as morphisms.
Then there is a functor $D$ from $\SLEF$ to the category $\Spec$ of spectra
which maps a stable functor $T$ to its derivative $\spec{TS^0}$ at $S^0$.
We have shown that the homology theory $h_{\bullet}(-; T)$ induced by
a linear enriched functor $T$ is represented by the spectrum
$D(LT) = L(\spec{TS^0})$.

Conversely, any homology theory represented by a spectrum
comes from a linear enriched functor.
In fact, $D$ has a left adjoint $I \colon \Spec \to \SLEF$
defined as follows:
For a spectrum $E = \{E_n\}$, $I{E}$ is the enriched functor which
maps $X$ to the zeroth space $L(E \wedge X)_0$ of the spectrification
of the prespectrum $E \wedge X = \{E_n \wedge X\}$.
The unit $E \to DIE$ and the counit $IDT \to T$ of the adjunction are
weak equivalences given by the maps
\begin{eqnarray*}
  &E_n \to \Omega^n(E_n \wedge S^n) \to \Omega^{\infty}(E_{\infty} \wedge S^n)
  = L(E \wedge S^n)_0 = I{E}(S^n) = DI{E}_n,&
  \\
  &ID{T}X = L(\spec{TS^0} \wedge X)_0 \xrightarrow{L\mu}
  L(\spec{TX})_0 = LTX \cong TX,&
\end{eqnarray*}
where $\mu \colon \spec{TS^0} \wedge X \to \spec{TX}$ is a map of
prespectra consisting of the maps $T(S^n) \wedge X \to T(\Sigma^n X)$
induced by the identity of $S^n \wedge X$.

Let us regard $\Spec$ and $\SLEF$ as a model category with respect to
the classes of weak equivalences, fibrations, and cofibrations
consist, respectively, of level weak equivalences, level fibrations,
and morphisms that have the left lifting property with respect to
the class of trivial fibrations.
Then we have the following.

\begin{prp}
  The functor $D \colon \SLEF \to \Spec$ is a right Quillen equivalence,
  and hence induces an equivalence between the homotopy categories.
\end{prp}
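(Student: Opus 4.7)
The plan is to argue in two steps: first verify that $(I, D)$ forms a Quillen adjunction, then invoke the natural weak equivalences $\eta_E \colon E \to DIE$ and $\varepsilon_T \colon IDT \to T$ already exhibited above to establish the standard Quillen equivalence criterion.

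In both $\Spec$ and $\SLEF$ the weak equivalences and fibrations are defined levelwise, while $D$ sends a stable linear enriched functor $T$ to the prespectrum $\{TS^n\}$ with evaluation as structure maps. Hence $D$ takes level (trivial) fibrations in $\SLEF$ to level (trivial) fibrations in $\Spec$, confirming that $D$ is a right Quillen functor and $(I, D)$ a Quillen adjunction.

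To upgrade to a Quillen equivalence I would apply the criterion that, for every cofibrant $E \in \Spec$ and fibrant $T \in \SLEF$, a morphism $f \colon IE \to T$ is a weak equivalence if and only if its adjunct $\hat f \colon E \to DT$ is. The triangle identities give $\hat f = Df \circ \eta_E$ and $f = \varepsilon_T \circ I\hat f$. Since $D$ preserves weak equivalences and $\eta_E$ is one, two-out-of-three yields the implication ``$f$ a weak equivalence $\Rightarrow$ $\hat f$ a weak equivalence''. Conversely, since $\varepsilon_T$ is a weak equivalence, ``$\hat f$ a weak equivalence $\Rightarrow$ $f$ a weak equivalence'' reduces to showing that $I$ sends $\hat f$ to a weak equivalence.

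The main obstacle will therefore be verifying that $I$ preserves the relevant weak equivalences of spectra. Given a level weak equivalence $g \colon E \to E'$ between cofibrant spectra, one must show that $IE(X) = L(E \wedge X)_0 \to L(E' \wedge X)_0 = IE'(X)$ is a weak equivalence for every $X \in \C_0$. Since $\C_0$ contains all finite CW-complexes, smashing with $X$ preserves levelwise weak equivalences of CW-prespectra, and spectrification followed by the zeroth-space functor preserves such equivalences between sufficiently cofibrant prespectra; cofibrancy of $E$ secures exactly this good behaviour, and after a cofibrant replacement of $DT$ if needed, both directions of the criterion are confirmed.
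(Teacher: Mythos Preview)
The paper does not supply a proof of this proposition; it simply records, in the paragraph immediately preceding the statement, that the unit $E \to DIE$ and the counit $IDT \to T$ are weak equivalences, and then asserts the proposition. Your proposal is therefore an attempt to fill in what the paper leaves implicit, and in outline it follows exactly the route the paper gestures at: establish the Quillen adjunction, then feed the already-exhibited unit/counit weak equivalences into the standard criterion.

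Your verification that $D$ is right Quillen is correct (both model structures are levelwise and $D$ is evaluation at spheres), and the forward implication of the criterion is clean since $D$ preserves all weak equivalences. The place where your argument is not yet complete is the backward implication. You correctly isolate the issue as showing that $I$ takes $\hat f \colon E \to DT$ to a weak equivalence, but the suggested fix ``after a cofibrant replacement of $DT$ if needed'' does not quite work: replacing $DT$ by a cofibrant object alters the adjunct map, so Ken Brown's lemma cannot be invoked directly. Your sketch that smashing with $X$ and then spectrifying preserves level weak equivalences also tacitly assumes $X$ is a finite CW complex, whereas $\C_0$ is only required to \emph{contain} finite CW complexes and may be strictly larger.

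A cleaner way to close the gap, staying within the paper's framework, is to observe that in both level model structures every object is fibrant and then argue that $D$ \emph{reflects} weak equivalences between objects of $\SLEF$: if $\alpha_{S^n}$ is a weak equivalence for all $n$, then linearity and the five-lemma propagate this to $\alpha_X$ for every $X$ built from spheres by finitely many cofibre sequences. With $D$ reflecting weak equivalences and the (underived) unit a weak equivalence for every cofibrant $E$, Hovey's criterion (e.g.\ \cite[Cor.~1.3.16]{hovey}, in spirit) yields the Quillen equivalence without needing to control $I$ on non-cofibrant targets.
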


\begin{crl}
  The homotopy category of $\SLEF$ is equivalent to the stable category.
\end{crl}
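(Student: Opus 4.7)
The plan is to derive the corollary directly from the preceding proposition together with the standard identification of $\Ho(\Spec)$ with the stable category. Concretely, I would invoke the general homotopical fact that any right Quillen equivalence between model categories induces an adjoint equivalence of homotopy categories through its total derived functors; applied to $D \colon \SLEF \to \Spec$, this immediately yields $\Ho(\SLEF) \simeq \Ho(\Spec)$.

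Next, I would identify $\Ho(\Spec)$ with the classical stable homotopy category. Stability is the key: for $T \in \SLEF$, the requirement $TX \cong L(\spec{TX})_0$ forces the adjoint structure maps $T(S^n) \to \Omega T(S^{n+1})$ of the derivative $D(T) = \spec{TS^0}$ to be weak equivalences, so up to level equivalence the essential image of $D$ consists of $\Omega$-spectra. On $\Omega$-spectra, level weak equivalences coincide with stable equivalences, so the homotopy category of $\Spec$ under the prescribed level model structure recovers the classical stable category. As an alternative viewpoint, the unit $E \to DIE$ and counit $IDT \to T$ recorded just before the proposition are already weak equivalences, which makes the derived adjunction manifestly the familiar equivalence between spectra and their associated infinite-loop-theoretic data.

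The hard part, I expect, will be this second step: confirming that the level model structure on $\Spec$ produces exactly the classical stable homotopy category rather than a strictly weaker localization. The restriction to \emph{stable} functors in $\SLEF$ is precisely what forces $D$ to factor through $\Omega$-spectra, which is the locus where level and stable equivalences agree; any subtlety in the argument will concentrate on making this restriction rigorous, most likely by a cofibrant/fibrant replacement argument pushing arbitrary prespectra into the $\Omega$-spectrum locus before applying the derived equivalence.
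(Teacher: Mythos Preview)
Your approach matches the paper's: the corollary is stated there without a separate proof, as an immediate consequence of the preceding proposition that $D$ is a right Quillen equivalence and hence induces an equivalence of homotopy categories. The subtlety you raise about level versus stable equivalences is not spelled out in the paper; the reference \cite{lewis-may-steinberger} reserves ``spectrum'' for objects whose adjoint structure maps are homeomorphisms, so $\Ho(\Spec)$ is taken to be the stable category by convention, and your argument via $\Omega$-spectra simply makes that identification explicit.
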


\begin{rem}
  Let $T \colon \C_0 \to \NumGen_0$ be an enriched functor.
  Then for every real inner product space $V$ the standard orthogonal
  group action on $V$ induces an orthogonal group action on the one-point
  compactification $S^V$ of $V$, and hence on $T(\Sigma^V X)$ where
  $\Sigma^V X = S^V \wedge X$.  
  With respect to the orthogonal group action on each $T(\Sigma^V X)$,
  the prespectrum $\{ T(\Sigma^V X) \}$ indexed by real inner product
  spaces $V$ form an orthogonal prespectrum.
  Thus the derivative $\spec{TX} = \{ T(\Sigma^n X) \}$ at $X$
  extends to an orthogonal spectrum, and consequently
  is a symmetric spectrum with respect to the symmetric group action
  on $T(\Sigma^n X)$ induced by the embedding of the symmetric group
  into the orthogonal group as the subgroup of coordinate permutations.
\end{rem}
\section{Bivariant homology-cohomology theories}
\label{sec:homology and cohomology}

We now introduce the notion of a bilinear functor,
and describe a passage from bilinear functors
to generalized cohomology theories.
In fact, we shall show that a bilinear functor gives rise to
a pair of generalized homology and cohomology theories,
or in other words, a bivariant homology-cohomology theory.

Let $F \colon {\C_0}^{\op} \times \C_0 \to \NumGen_0$ be a bivariant
functor which is contravariant with respect to the first argument,
and is covariant with respect to the second argument.
We say that $F$ is enriched (over $\NumGen_0$)
if for all pointed spaces $X$, $X'$, $Y$, and $Y'$, the map
\[
F_0(X',X) \times F_0(Y,Y') \to F_0(F(X,Y),F(X',Y')),\quad
(f,g) \mapsto F(f,g)
\]
is continuous and is pointed in the sense that
if either $f$ or $g$ is constant then so is $F(f,g)$.

\begin{dfn}
  An enriched bifunctor $F \colon {\C_0}^{\op} \times \C_0\to \NumGen_0$
  is called a bilinear functor if for all $(X,A)$ and $(Y,B)$
  the sequences
  \[
  \begin{split}
    & F(X \cup CA,Y) \to F(X,Y) \to F(A,Y),
    \\
    & F(X,B) \to F(X,Y) \to F(X,Y \cup CB)
  \end{split}
  \]
  are homotopy fibration sequences.
\end{dfn}

\begin{exa}
  Given a linear functor $T \colon \C_0 \to \NumGen_0$, there is
  a bilinear functor $F_T$ given by the formula $F_T(X,Y) = F_0(X,TY)$.
  Such a bilinear functor is said to be of the normal form.
  Conversely, a bilinear functor $F$ determines a linear functor
  $T \colon \C_0 \to \NumGen_0$ by putting $TX = F(S^0,X)$,
  called the covariant part of $F$.
  There is a natural transformation $F \to F_T$ such that
  \(
  F(X,Y) \to F_T(X,Y) = F_0(X,F(S^0,Y))
  \)
  maps every $\xi \in F(X,Y)$ to the map
  $X \to F(S^0,Y),\ x \mapsto F(x,1_Y)(\xi)$.
  Here we identify $x \in X$ with the evident map $S^0 \to X$.
\end{exa}

\begin{thm}
  \label{thm:bivariant_theory}
  For every bilinear functor
  $F \colon {\C_0}^{\op} \times \C_0 \to \NumGen_0$,
  there exist a generalized homology theory $X \mapsto \{h_n(X;F)\}$ and
  a generalized cohomology theory $X \mapsto \{h^n(X;F)\}$ such that
  \begin{equation}
    \label{eq:homology_cohomology_groups}
    h_n(X;F) \cong \pi_0 F(S^{n+k},\Sigma^k X), \quad
    h^n(X;F) \cong \pi_0 F(\Sigma^k X,S^{n+k})
  \end{equation}
  hold whenever $k,\, n+k \geq 0$.
  Moreover, $h_n(X;F)$ is naturally isomorphic to the $n$-th homology group
  $h_n(X;T)$ given by the covariant part $T$ of $F$.
\end{thm}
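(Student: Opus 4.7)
The plan is to define
\[
h_n(X;F) := \pi_0 F(S^{n+k}, \Sigma^k X), \qquad h^n(X;F) := \pi_0 F(\Sigma^k X, S^{n+k})
\]
for any admissible $k$ (that is, $k \geq 0$ and $n+k \geq 0$), then verify that these groups do not depend on the choice of $k$ and fit together into a homology and a cohomology theory on $\C_0$. The first step is to establish two stabilization weak equivalences. Applying the second linearity of $F$ to the cofibration $\Sigma^k X \subset C\Sigma^k X \subset \Sigma^{k+1} X$, whose middle term is contractible by Proposition~\ref{prp:continuity_means_homotopy_invariance}, yields a natural weak equivalence $F(W, \Sigma^k X) \simeq \Omega F(W, \Sigma^{k+1} X)$; dually, the first linearity applied to $S^{n+k} \subset CS^{n+k} \subset S^{n+k+1}$ yields $F(S^{n+k+1}, Y) \simeq \Omega F(S^{n+k}, Y)$. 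Composing these isomorphisms gives
\[
\pi_0 F(S^{n+k+1}, \Sigma^{k+1} X) \cong \pi_1 F(S^{n+k}, \Sigma^{k+1} X) \cong \pi_0 F(S^{n+k}, \Sigma^k X),
\]
proving independence of $k$ for $h_n$; the parallel computation with the two slots exchanged works for $h^n$. The suspension isomorphisms $h_n(X;F) \cong h_{n+1}(\Sigma X;F)$ and $h^n(X;F) \cong h^{n+1}(\Sigma X;F)$ fall out by reindexing, and iterating the second linearity twice gives $\pi_0 F(S^{n+k}, \Sigma^k X) \cong \pi_2 F(S^{n+k}, \Sigma^{k+2} X)$, so each $h_n$ and $h^n$ is an abelian group.

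The remaining Eilenberg--Steenrod axioms follow directly from bilinearity. Homotopy invariance is Proposition~\ref{prp:continuity_means_homotopy_invariance}. For exactness, given a cofibration $A \subset X$ in $\C_0$, smashing with $S^k$ yields a cofibration $\Sigma^k A \subset \Sigma^k X$ with cofiber $\Sigma^k(X \cup CA)$, and the second linearity of $F$ at $W = S^{n+k}$ produces a homotopy fibration sequence
\[
F(S^{n+k}, \Sigma^k A) \to F(S^{n+k}, \Sigma^k X) \to F(S^{n+k}, \Sigma^k(X \cup CA)).
\]
Taking $\pi_0$ yields exactness of $h_n(A;F) \to h_n(X;F) \to h_n(X \cup CA;F)$, and together with the suspension isomorphism this generates the full long exact sequence. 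The contravariant first linearity plays the symmetric role for $h^n$. For additivity, the split cofibration $X \to X \vee Y \to Y$ yields a weak equivalence $F(W, X \vee Y) \simeq F(W, X) \times F(W, Y)$ exactly as in the proof of Theorem~\ref{thm:homology_theory}, and dually $F(X \vee Y, W) \simeq F(X, W) \times F(Y, W)$.

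To conclude, the identification $h_n(X;F) \cong h_n(X;T)$ for the covariant part $T(Y) = F(S^0, Y)$ is a direct consequence of the stabilization isomorphism in the first slot: for $n \geq 0$, choose $k = 0$ and iterate the first linearity to obtain $h_n(X;F) = \pi_0 F(S^n, X) \cong \pi_n F(S^0, X) = \pi_n TX = h_n(X;T)$; for $n < 0$, choose $k = -n$ and read off $h_n(X;F) = \pi_0 F(S^0, \Sigma^{-n} X) = \pi_0 T(\Sigma^{-n} X) = h_n(X;T)$. Naturality in $X$ follows because the stabilization isomorphism is natural in the second slot. The main obstacle is the bookkeeping in the first paragraph: one must verify that the two stabilization isomorphisms are natural in both arguments, compatible with one another, and compatible with the abelian group structures coming from the higher $\pi_j$'s, so that the comparison identifications at the end are genuinely natural and agree with the isomorphisms coming from Theorem~\ref{thm:homology_theory}.
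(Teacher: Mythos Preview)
Your proposal is correct and follows essentially the same route as the paper: both derive from bilinearity the two stabilization isomorphisms $\pi_n F(X,Y) \cong \pi_{n+1} F(X,\Sigma Y)$ and $\pi_n F(\Sigma X,Y) \cong \pi_{n+1} F(X,Y)$, combine them into the suspension isomorphism $\pi_0 F(X,Y) \cong \pi_0 F(\Sigma X,\Sigma Y)$, and then read off the axioms. The only cosmetic differences are that the paper packages the definition as a colimit over the suspension isomorphisms (rather than fixing an admissible $k$ and proving independence) and, for the homology part, reduces directly to Theorem~\ref{thm:homology_theory} via the covariant part $T$ instead of re-verifying the axioms.
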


\begin{proof}
  Since $F(X,Y)$ is linear with respect to $Y$,
  $\pi_n F(X,Y)$ is an abelian group for all $X$, $Y$ and $n \geq 0$.
  Clearly, this abelian group structure is natural with respect to
  both $X$ and $Y$.
  Moreover, the bilinearity of $F$ implies natural isomorphisms
  \begin{equation*}
    \pi_n F(X,Y) \cong \pi_{n+1} F(X,\Sigma Y), \quad
    \pi_n F(\Sigma X,Y) \cong \pi_{n+1} F(X,Y)
  \end{equation*}
  Consequently, there is a natural isomorphism
  $\pi_0 F(X,Y) \cong \pi_0 F(\Sigma X,\Sigma Y)$,
  called the suspension isomorphism.

  For every pointed space $X$ and every integer $n$, let us define
  \[
  h_n(X;F) = \colim_{k \to \infty} \pi_0 F(S^{n+k},\Sigma^k X), \
  h^n(X;F) = \colim_{k \to \infty} \pi_0 F(\Sigma^k X,S^{n+k})
  \]
  where the colimits are taken with respect to the suspension isomorphisms.
  Clearly (\ref{eq:homology_cohomology_groups}) holds, and we have
  $h_n(X;F) \cong h_n(X;T)$ where $T$ is the covariant part of $F$.
  Thus the functor $X \mapsto \{h_n(X;F)\}$ together with
  the evident natural isomorphism $h_n(X;F) \cong h_{n+1}(\Sigma X;F)$
  defines a generalized homology theory.
  Similarly, the covariant functor $X \mapsto \{h^n(X;F)\}$
  together with the natural isomorphism $h^{n+1}(\Sigma X;F) \cong h^n(X;F)$
  defines a generalized cohomology theory,
  because it satisfies the homotopy and exactness axioms.
\end{proof}
  
\begin{exa}

  (1) Let $F \colon {\NumGen_0}^{\op} \times \NumGen_0 \to \NumGen_0$
  be the bilinear functor given by the formula $F(X,Y) = AG(Y)^X$.
  Then $h^n(X,F)$ is the singular cohomology group for a CW-complex $X$.
  But there exists no bilinear functor representing
  the singular cohomology groups of all $X$.
  (Cf.\ Example~\ref{example:homology theories} (2).)

  (2) The second author constructs in \cite{yoshida} a bilinear functor
  $F$ such that $h^n(X; F)$ is isomorphic to the \v{C}ech cohomology group
  for all $X$,
  and $h_n(X;F)$ is isomorphic to the Steenrod homology group
  for any compact metrizable space $X$.

\end{exa}

\begin{thebibliography}{1}

\bibitem{dold-thom}
A.~Dold and R.~Thom.
\newblock Quasifaserungen und unendliche symmetrische produkte.
\newblock {\em Ann. Math.}, 67:239--281, 1958.

\bibitem{goodwillie_I}
T.~Goodwillie.
\newblock Calculus i: The first derivative of pseudoisotopy theory.
\newblock {\em {$K$}-{T}heory}, 4:1--27, 1990.

\bibitem{zemmour:diffeology}
P.~Iglesias-Zemmour.
\newblock Diffeology.
\newblock (working document).

\bibitem{lewis-may-steinberger}
G.~Lewis, J.~P. May, and M.~Steinberger.
\newblock {\em Equivariant Stable Homotopy Theory}, volume 1213 of {\em Lecture
  Notes in Math.}
\newblock Springer-Verlag, 1986.

\bibitem{config}
K.~Shimakawa.
\newblock Configuration spaces with partially summable labels and homology
  theories.
\newblock {\em Math. J. Okayama Univ.}, 43:43--72, 2001.

\bibitem{yoshida}
K.~Yoshida.
\newblock A continuous bifunctor representing \v{C}ech cohomology.
\newblock preprint.

\end{thebibliography}

\end{document}